\documentclass[]{interact}

\usepackage{epstopdf}
\usepackage[caption=false]{subfig}

\usepackage{amsfonts,mathtools}
\usepackage{xparse}
\usepackage{color}
\usepackage{mathrsfs}

\usepackage[natbibapa,nodoi]{apacite}
\theoremstyle{plain}
\newtheorem{theorem}{Theorem}[section]

\newtheorem{corollary}[theorem]{Corollary}

\theoremstyle{definition}

\newtheorem{example}[theorem]{Example}

\theoremstyle{remark}
\newtheorem{remark}{Remark}

\def\cEE#1{\textcolor{black}{#1}}
\def\IF{\infty}
\def\KK{E}
\def\LT{\left}
\def\RT{\right}
\def\rw{\rightarrow}

\def\ksn#1{\textcolor{black}{#1}}

\usepackage[shortlabels]{enumitem}

\usepackage[nameinlink, noabbrev]{cleveref}

\newcommand{\abs}[1]{\left\lvert #1 \right\rvert}

\newcommand{\ABs}[1]{ \biggl \lvert #1 \biggr \rvert}

\DeclarePairedDelimiterXPP\pk[1]{\mathbb{P}}\{ \}{}{ #1}
\DeclarePairedDelimiterXPP\E[1]{\mathbb{E}}\{ \}{}{	#1}
\DeclarePairedDelimiterXPP\ETT[1]{\widetilde{\mathbb{E}}}\{ \}{}{	#1}

\DeclarePairedDelimiterXPP\ind[1]{\mathbb{I}}( ){}{	#1}

\NewDocumentCommand{\ceil}{s O{} m}{%
	\IfBooleanTF{#1} 
	{\left\lceil#3\right\rceil} 
	{#2\lceil#3#2\rceil} 
}
\NewDocumentCommand{\floor}{s O{} m}{%
	\IfBooleanTF{#1} 
	{\left\lfloor#3\right\rfloor}
	{#2\lfloor#3#2\rfloor}
}

\newcommand{\norm}[1]{\lVert  #1 \rVert }

\definecolor{c20}{rgb}{0.,0.7,0.}
\definecolor{c30}{rgb}{0.,0.,1.}
\definecolor{c40}{rgb}{1,0.1,0.7}
\definecolor{c50}{rgb}{1,0,0}
\definecolor{c60}{rgb}{1,0.9,0.1}
\definecolor{c70}{rgb}{0.50,1.00,0.00}

\numberwithin{equation}{section}
\newtheorem{theo}{Theorem}[section]
\newtheorem*{theo*}{Theorem}

\newtheorem{sat}[theo]{Proposition}
\newtheorem{de}[theo]{Definition}
\newtheorem{lem}[theo]{Lemma}

\newtheorem{korr}[theo]{Corollary}

\numberwithin{equation}{section}

\newcommand{\COM}[1]{} 
\def\IF{\infty}

\newcommand{\R}{\mathbb{R}}

\newcommand{\limit}[1]{\lim_{#1 \to   \infty}}

\newcommand{\vk}[1]{\boldsymbol{#1}}

\def\k1#1{{\textcolor{black}{#1}}}

\def\bqn#1{ \begin{eqnarray} #1 \end{eqnarray}}

\newcommand{\BS}{\begin{sat}}
	\newcommand{\ES}{\end{sat}}
\newcommand{\BT}{\begin{theo}}
	\newcommand{\ET}{\end{theo}}
\newcommand{\BK}{\begin{korr}}
	\newcommand{\EK}{\end{korr}}

\newcommand{\BEX}{\begin{example}}
	\newcommand{\EEX}{\end{example}}

\newcommand{\BD}{\begin{de}}
	\newcommand{\ED}{\end{de}}

\newcommand{\BIT}{\begin{itemize}}
	\newcommand{\EIT}{\end{itemize}}
\newcommand{\BDI}{\begin{description}}
	\newcommand{\EDI}{\end{description}}

\newcommand{\BRM}{\begin{remark}}
	\newcommand{\ERM}{\end{remark}}

\newcommand{\BEL}{\begin{lem}}
	\newcommand{\EEL}{\end{lem}}

\def\inn{\in \mathbb{N}}

\begin{document}

\title{Parisian ruin of locally self-similar Gaussian processes}

\author{
	\name{
		Svyatoslav ~M. Novikov\thanks{
			CONTACT Svyatoslav ~M. Novikov. Email:
			Svyatoslav.Novikov@unil.ch
		}
	}
	\affil{
		Department of Actuarial Science, University of Lausanne, Unil-Dorigny,
		1015 Lausanne, Switzerland}
}

\maketitle

\begin{abstract}
	We derive exact tail asymptotics of the Parisian ruin probability for Gaussian risk models driven by locally self-similar Gaussian processes with a power-type deterministic trend. The considered setting includes non-stationary Gaussian processes whose local correlation structure is governed by a self-similar limiting process, extending classical fractional Brownian motion models.
	
	The asymptotic behaviour is shown to depend on the interplay between the local variance decay, the self-similarity index, and the trend exponent, leading to several distinct regimes. In each regime, the ruin probability admits an explicit asymptotic representation involving Parisian Pickands-type constants.
	
	The analysis relies on a uniform Pickands lemma allowing for families of limiting Gaussian fields, extending existing double-sum techniques and enabling the treatment of locally self-similar Gaussian risk models.
\end{abstract}

\begin{keywords}
	Gaussian process; self-similar process; ruin probability; Parisian ruin; Parisian Pickands constants
\end{keywords}

\begin{amscode}
	Primary 60G15;
	Secondary 60G70
\end{amscode}

\section{Introduction}
A classical benchmark stochastic process for modelling the insurance risk process is Brownian motion. In  recent contributions general risk processes are modelled by relying on a general centered Gaussian process 
$X(t)$, $t\in[0,T]$   with almost surely
continuous trajectories and variance function $\sigma^2(t)$. The classical finite-time ruin problem concerns the calculation of the tail of 
\[
M(T)=\sup_{t\in[0,T]} X(t).
\]
Since this cannot be done in closed form for general $X$, the main focus of the literature has been on deriving asymptotic approximations for $\pk{M(T)>u}$ as $u\to\infty$, which is a key topic in extremes of Gaussian processes. The theory of extremes
for non-stationary Gaussian processes has been extensively studied, starting
with the seminal work of Piterbarg and co-authors; see, e.g.,
\citet*{Pit96,high2} and the references therein. A key assumption in this
literature is that the variance function $\sigma^2(t)$ attains its maximum at a
unique point, which we take without loss of generality to be $t_0=0$, and that
the local behaviour of $\sigma(t)$ around this point is known. \\ 
A typical
assumption is
\begin{equation}
\label{sigmaE}
\sigma(0)-\sigma(t) \sim b t^{\beta}, \quad t \downarrow 0,
\end{equation}
for some positive constants $b$ and $\beta$, where $\sim$ denotes asymptotic
equivalence as the argument tends to zero (or infinity, depending on the
context).

For a Gaussian process $Y(t)$, $t\ge 0$, define the variogram
\[
V_Y(t,s):=\mathrm{Var}(Y(t)-Y(s)).
\]
In addition to \eqref{sigmaE}, a second fundamental assumption is the
so-called Pickands condition
\begin{equation}
\label{PickE}
1-r_X(t,s) \sim a V_Y(t,s), \quad s,t \downarrow 0,
\end{equation}
where $r_X(t,s)$ denotes the correlation function of $X$, $a>0$ is a constant,
and $Y$ is a centered Gaussian process.

A classical example arises when $Y$ is a fractional Brownian motion (fBm) with
Hurst index $H=\kappa/2$, $\kappa\in(0,2]$, that is,
$\E{Y^2(t)}=t^{\kappa}$, $t\ge0$, and
\[
V_Y(t,s)=|t-s|^{\kappa}, \quad s,t\ge0.
\]
An important feature of fBm is its self-similarity: for any $a>0$,
\[
\{Y(at):t\ge0\} \stackrel{\mathcal{D}}{=}
\{a^{H}Y(t):t\ge0\}.
\]

In \citet*{tabis}, condition \eqref{PickE} was extended by allowing $Y$ to be a
general self-similar centered Gaussian process. Following that work, a Gaussian
process $X$ satisfying \eqref{PickE} for some self-similar $Y$ is referred to as
\emph{locally self-similar at~$0$}. The tail asymptotics of $M(T)$ were derived
there for a broad class of such processes.

The study of $M(T)$ corresponds to the classical ruin probability in Gaussian
risk models. A natural and practically relevant extension is the Parisian ruin
probability, which has been investigated for Gaussian risk models in
\citet*{MR3457055,dkebicki2015parisianJAP}; see also
\citet*{ji2020cumulative,kriukov2022parisian,krystecki2022parisian,
jasnovidov2020approximation,jasnovidov2024parisianruininsurerreinsurer,PavlePar}.
Specifically, it is defined as
\[
p_{X-c}(\Gamma_{T,L},u)
=\pk*{\Gamma_{T,L}(X-c)>u},
\]
where
\[
\Gamma_{T,L}(f)
=\sup_{t\in[0,T]}\;\inf_{s\in[0,L]} f(t+s), \quad L\ge0,
\]
is the Parisian functional and $c(t)$ is a continuous deterministic trend
function. In insurance applications one typically considers power-type trends of the form
$c(t)=dt^{\gamma}$ with $d,\gamma>0$. Note that $\Gamma_{T,L}$ reduces to the
supremum functional when $L=0$.

The main objective of this paper is to derive exact asymptotics for the Parisian
ruin probability associated with general locally self-similar Gaussian
processes. At first glance, such an extension may appear straightforward in
view of the extensive literature on extremes of Gaussian processes. However,
similarly to the phenomenon observed in \citet*{novikov2025sojourn}, the Parisian
setting turns out to require new methodological developments.

As in the classical case, a key step is the approximation of ruin probabilities
over shrinking intervals (often referred to as Pickands intervals), see
\citet*{PickandsA}. More precisely, one needs sharp asymptotics for
\[
p_{X-c}(\Gamma_{T_u,L_u},u), \quad u\to\infty,
\]
where $T_u$ and $L_u$ tend to zero at rates dictated by the local correlation
structure. Moreover, following the uniform double-sum methodology of
\citet*{Uniform2016}, such approximations must hold uniformly.

In contrast to the classical setting of \citet*{Uniform2016}, where a single
limiting Gaussian field appears, the locally self-similar framework leads to a
family of limiting fields. Consequently, existing uniform Pickands-type results
are not directly applicable. Our main technical contribution is therefore an
extension of \citet*[Theorem~2.2]{Uniform2016} to the case of multiple limiting
Gaussian fields. Informally, this result may be viewed as a \emph{uniform
Pickands lemma with non-unique limits}.

Such a result is of independent interest and can be useful in other contexts,
for example in the study of ruin probabilities for locally stationary Gaussian
processes $X(t)$, $t\in[0,1]$, satisfying
\begin{equation}
\label{localstat}
Cov(X(t),X(s))
=1-c_t(1+o(1))|t-s|^{\alpha}, \quad |t-s|\to0,
\end{equation}
where $c_t>0$ is a continuous function. While the asymptotics of
\[
\pk{\sup_{t\in[0,1]}X(t)>u}
\]
can be obtained via partitioning and Slepian-type arguments
\citep[Theorem~7.1]{Pit96}, such techniques are unavailable for more complex
functionals, including the Parisian functional.

Similar difficulties arise for vector-valued Gaussian processes, where neither
the Slepian inequality nor existing vector versions of the uniform Pickands
lemma apply. Allowing for families of limiting Gaussian fields resolves this
issue and substantially broadens the scope of the method.

The paper is organised as follows. In Section~\ref{the-weak-conv} we establish a
novel Pickands-type lemma for families of limiting Gaussian fields. We then show
that the Parisian Pickands constant defined via fractional Brownian motion can
equivalently be expressed using a general self-similar Gaussian process.
Subsequently, we derive the exact asymptotics of the Parisian ruin probability
for locally self-similar Gaussian processes and illustrate the results with
examples. Auxiliary results are collected in Section~4, while all proofs are
deferred to Section~5.

\section{Main Results}
Given a Gaussian process $Y(t), t \geq 0$ and a function $h(t), t \geq 0$, as in \eqref{Pickands_const} below, we denote
\begin{align*}
&
\mathcal{H}^{\mathrm{par},h}_{Y,T,L} \coloneq 
\E*{\sup_{t \in [0,T]}
	\inf_{s \in [0,L]}
	e^{\sqrt{2}Y(t+s)-\mathrm{Var}(Y(t+s))-h(t+s)}},\,
\\
&
\mathcal{H}^{\mathrm{par}}_{Y,L} :=
\lim_{T \to \infty} \frac{\mathcal{H}^{\mathrm{par},0}_{Y,T,L}}{T}. 
\end{align*}

\cEE{For given $\kappa \in (0,2]$ denote by $B_{\kappa}(t)$ the centered continuous one-dimensional fBm with Hurst parameter $\kappa/2$, i.e., $B_{\kappa}$  is a Gaussian process} satisfying  $B_{\kappa}(0)=0$ and $$\mathrm{Var}(B_{\kappa}(t_1)-B_{\kappa}(t_2))=|t_1-t_2|^{\kappa}, \cEE{t_i\ge 0,i=1,2}.$$	 
If $h=0$, then we suppress the superscript $h$ and write simply $\mathcal{H}^{\mathrm{par}}_{Y,T,L}$
instead of $\mathcal{H}^{\mathrm{par},0}_{Y,T,L}.$

\k1{If $h=0$ and $Y=B_{\kappa}$, then we suppress the subscript $Y$ as well and write simply $\mathcal{H}^{\mathrm{par}}_{\kappa,T,L}$ instead of $\mathcal{H}^{\mathrm{par},0}_{B_{\kappa},T,L}$ and $\mathcal{H}^{\mathrm{par}}_{\kappa,L}$ instead of $\mathcal{H}^{\mathrm{par}}_{B_{\kappa},L}$.} 

\ksn{In the definition of $\mathcal{H}^{\mathrm{par},h}_{Y,T,L}$ we also allow $T=\infty$, denoting the corresponding constant by $\mathcal{H}^{\mathrm{par},h}_{Y,\infty,L}$}.

Now we will emphasize which kind of $Y(t)$ we will consider in \eqref{PickE}. Let $(Y(t))_{t \geq 0}$ be a centered Gaussian process with a.s. continuous sample paths. Set $V_Y(t) \coloneq V_Y(1,t)$. If the following assumptions 
\begin{enumerate}[S1]
	\item \label{LS1}
	$Y(\cdot)$ is self-similar with index $\alpha / 2>0$ and $\mathrm{Var}(Y(1))=1$;
	
	\item \label{LS2} there exist $\kappa \in(0,2]$ and $c_Y>0$ such that $$V_Y(1-h)\sim c_Y|h|^\kappa, \quad h \to 0$$
\end{enumerate}
are satisfied, then we simply write $Y \in \mathbf{S}\left(\alpha, \kappa, c_Y\right)$. 

As in \citet*{MR4206416}, we introduce the class of locally self-similar Gaussian processes, which are characterized by a local property at $t=0$.
\BD A centered Gaussian process $X(t), t \geq 0$ is said to be locally self-similar at $0$ with parameters $a$ and $Y$, if \eqref{PickE} holds for some $a>0, Y \in \mathbf{S}\left(\alpha, \kappa, c_Y\right)$. 
\ED

\begin{remark}
	If the process $Y(t)$ was defined for negative $t$, we could have defined self-similarity at a point $t_0 \in \R$ as well (this way, the point being exactly $0$ makes no difference). Indeed, in \eqref{PickE} it is enough to put $s,t \to t_0$ instead of $s,t \downarrow 0$.
\end{remark} 

In \citet*[Theorem 3.4]{MR4206416} a relation between the Pickands constant of a self-similar Gaussian process $Y$ and the classical Pickands constant was proved. We extend the aforementioned result to the Parisian functionals. In our setting there is an additional technical difficulty due to the lack of the Slepian inequality for our functional.\\ 

Let 
\begin{align}
\label{betahat}
\hat{\beta}\coloneq\frac{\beta \kappa}{\alpha},\;\hat{\gamma}\coloneq\frac{\gamma \kappa}{\alpha}
\end{align}
for 
given $\beta,\gamma>0$. Further, given a Gaussian process $Z(t),t\ge 0$ denote 
\begin{align}
\label{Zhat}
\widehat{Z}(t)\coloneq Z(t^{\kappa/\alpha}).
\end{align}  It follows that  $\widehat{Y} \in \mathbf{S}(\kappa,\kappa,c_{\widehat{Y}})$ with $c_{\widehat{Y}}=c_Y(\kappa/\alpha)^{\kappa}$. Indeed,
\begin{align*}
	V_{\widehat{Y}}(1-h)
	&=V_Y((1-h)^{\kappa/\alpha})
	=V_Y(1-(\kappa/\alpha)h+o(h))
	\\
	&=c_Y |(\kappa/\alpha)h+o(h)|^\kappa
	\sim c_Y (\kappa/\alpha)^\kappa |h|^\kappa, \quad h \to 0.
\end{align*}

Recall that in our notation
\begin{align*}
	&
	\mathcal{H}^{\mathrm{par}}_{\widehat{Y},L}
	=\lim_{T \to \infty}\frac{\E*{\sup_{t \in [0,T]}
		\inf_{s \in [0,L]}
		e^{\sqrt{2}\widehat{Y}(t+s)-\mathrm{Var}(\widehat{Y}(t+s))}}}{T},
	\\
	& 
	\mathcal{H}^{\mathrm{par}}_{\kappa,L}
	=\lim_{T \to \infty}\frac{\E*{\sup_{t \in [0,T]}
			\inf_{s \in [0,L]}
			e^{\sqrt{2}B_\kappa(t+s)-\mathrm{Var}(B_\kappa(t+s))}}}{T}.
\end{align*}

\BT \label{last}
If  $\widehat{Y} \in \mathbf{S}(\kappa,\kappa,c_{\widehat{Y}})$ and $L \geq 0$, \cEE{then}   \bqn{ \mathcal{H}^{\mathrm{par}}_{\widehat{Y},L}=c_{\widehat{Y}}^{1/\kappa}   \mathcal{H}^{\mathrm{par}}_{\kappa,c_{\widehat{Y}}^{1/\kappa}L} \in (0,\infty).
}
\ET

We extend \citet*[Theorem 4.2]{tabis} to the case of Parisian functional.
\k1{ Hereafter $\mathbb{I}(\cdot)$ stands for the indicator function.}

\BT\label{mainth}
If $X(t), t \geq 0$ is locally self-similar at $0$ with parameters $a$ and 
\\$Y \in \textbf{S}(\alpha,\kappa,c_Y)$ \ksn{which satisfies \eqref{sigmaE}}, then we have 
\begin{align} 
\label{mainasymp}
\pk*{\underset{t \in [0,T]}{\sup}\; \underset{s \in [0,L_u]}{\inf} \cEE{\Bigl(} X(t+s)-d(t+s)^{\gamma} 
	\cEE{\Bigr)} > u } \sim c u^{\cEE{p}}\Psi(u), \quad u\to \IF
\end{align} under the following conditions 
\begin{enumerate}[(i)]
	\item  If $\alpha < \min(\beta,2\gamma),\;\alpha \leq \kappa$, then for
	$L_u=Lu^{-2/\alpha-\frac{\alpha-\kappa}{\alpha}(2/\kappa-\max(2/\hat{\beta},1/\hat{\gamma}))}$, $L\geq 0$ 
	\eqref{mainasymp} holds with
	\\ $p=2/\kappa-\max(2/\hat{\beta},1/\hat{\gamma})$,
	\begin{align*}
	c=(ac_Y)^{1/\kappa}		
	\int\limits_{0}^{\infty} 
	\exp\cEE{\Bigl(}-d\mathbb{I}(2\gamma \leq \beta) z^{\gamma}
	-b\mathbb{I}(\beta\leq 2\gamma) z^{\beta}\cEE{\Bigr)}z^{\alpha/\kappa-1} \mathcal{H}^{\mathrm{par}}_{\kappa,(ac_Y)^{1/\kappa}\frac{\alpha}{\kappa} L  z^{\alpha/\kappa-1}} dz.
	\end{align*}
	
	\item  If $\alpha<\min(\beta,2\gamma)$ and $\alpha>\kappa$, then for $L_u=Lu^{-2/\alpha-\epsilon}$, 
	\\$\epsilon \in (0,\frac{\alpha-\kappa}{\alpha}(2/\kappa-\max(2/\hat{\beta},1/\hat{\gamma}))]$, $L>0$ \eqref{mainasymp} holds with $p=\frac{\epsilon\alpha}{\alpha-\kappa}$,
	\begin{align*}
	c&=(ac_Y)^{1/\kappa}		
	\int\limits_{0}^{\infty} 
	\exp\cEE{\Bigl(}-d\mathbb{I}(\epsilon =\frac{\alpha-\kappa}{\alpha}(2/\kappa-1/\hat{\gamma})) z^{\gamma}
	-b\mathbb{I}(\epsilon =\frac{\alpha-\kappa}{\alpha}(2/\kappa-2/\hat{\beta})) z^{\beta}\cEE{\Bigr)}
	\\
	&\quad \times
	z^{\alpha/\kappa-1} \mathcal{H}^{\mathrm{par}}_{\kappa,(ac_{Y})^{1/\kappa}
		\frac{\alpha}{\kappa} L  z^{\alpha/\kappa-1}} dz.
	\end{align*}
	If $\epsilon =\frac{\alpha-\kappa}{\alpha}(2/\kappa-\max(2/\hat{\beta},1/\hat{\gamma}))$, then the same works with $L=0$.
	
	\item  If $\alpha \geq \min(\beta,2\gamma)$ or $\alpha>\kappa$, then for $L_u=Lu^{\min\left(-2/\alpha,-2/\beta,-1/\gamma\right)}$, $L>0$ \eqref{mainasymp} holds with $p=0$, \\$$c=\mathcal{H}^{\mathrm{par},h}_{Y\mathbb{I}(\alpha\leq\min\left(\beta,2\gamma\right)),\ksn{\infty},a^{1/\alpha}L},$$ with 
	\[
	h(t)=a^{-\beta/\alpha}bt^{\beta}\mathbb{I}(\beta\leq\min\left(\alpha,2\gamma\right))+a^{-\gamma/\alpha}dt^{\gamma}\mathbb{I}\left(2\gamma\leq \min\left(\alpha,\beta\right)\right).
	\] 
	
	If $\alpha \geq \min(\beta,2\gamma)$, then the same works for $L=0$. In particular, the corresponding constants are well-defined, positive and finite.
\end{enumerate}	
\begin{remark}
	\label{bigalpha}
	If $\alpha>\min(\beta,2\gamma)$, then from the proof of case (iii) one can see that for \Cref{mainth} to hold  it is enough to have
	\[
	\limsup_{t,s \downarrow 0} \frac{1-r_X(t,s)}{V_Y(t,s)}<\infty
	\]
	instead of local self-similarity with parameters $a$ and $Y$, and \eqref{mainasymp} holds with
	\begin{align}
	\label{degconst}
	c=e^{-\mathbb{I}(\beta \leq 2\gamma)b L^{\beta}-\mathbb{I}(2\gamma \leq \beta)d L^{\gamma}}.
	\end{align} 
	This constant coincides with the constant $c$ given by case (iii) of \Cref{mainasymp}, because $Y \mathbb{I}(\alpha \leq \min(\beta,2 \gamma))=0$, so	
	\begin{align*}
	\mathcal{H}^{\mathrm{par},h}_{Y \mathbb{I}(\alpha \leq \min(\beta,2 \gamma)),\ksn{\infty},a^{1/\alpha}L}&=
	\E{\sup_{t \in [0,\infty]} \inf_{s \in [0,a^{1/\alpha}L]} e^{-h(t+s)}}
	\\
	&
	=e^{-h(a^{1/\alpha}L)}=e^{-\mathbb{I}(\beta \leq 2\gamma)b L^{\beta}-\mathbb{I}(2\gamma \leq \beta)d L^{\gamma}}.
	\end{align*} 
\end{remark}
\begin{remark}
	Scenarios (ii) and (iii) do not overlap. Indeed, the only possible situation of overlap could be the case $\alpha>\kappa,\,\alpha < \min(\beta,\kappa)$. But then $L_u=Lu^{-2/\alpha-\epsilon}<Lu^{-2/\alpha}$ in scenario (ii) and $L_u=Lu^{-2/\alpha}$ in scenario (iii).
\end{remark}
\begin{remark}
	In cases (i) and (ii) we have $L_u=L u^{-\frac{2+p(\alpha-\kappa)}{\alpha}}$. The intuitive explanation of this is as follows: in fact during the proof we will consider the Parisian ruin of $Z(t)=X(t^{\kappa/\alpha})-d t^{\hat{\gamma}}$. The main contribution to the Parisian ruin probability will come from $t$ of order
	$u^{-2/\kappa+p}$, and we will split the set of such $t$ into intervals of length of order $u^{-2/\kappa}$ (which could be referred to as Pickands intervals) and apply \Cref{the-weak-conv-cor} (with a suitable time rescaling) for each interval. In order to do this we should pick $L_u$ so that the intervals of length $L_u$ for $X(t)-dt^{\gamma}$ correspond to intervals of length of the same order $u^{-2/\kappa}$ for $Z(t)$ with $t$ of order $u^{-2/\kappa+p}$. 
	
	Let $[t,t+\tau]$ be one of such intervals for $Z(t)$, then (since $Z(t)=X(t^{\kappa/\alpha})-dt^{\hat{\gamma}}$) it corresponds to $[t^{\kappa/\alpha},(t+\tau)^{\kappa/\alpha}]$ for $X(t)$. The length of the latter interval is
	\[
	(t+\tau)^{\kappa/\alpha}-t^{\kappa/\alpha}
	=t^{\kappa/\alpha}
	((1+\tau/t)^{\kappa/\alpha}-1)
	\approx
	\frac{\kappa}{\alpha} t^{\kappa/\alpha-1} \tau.
	\]    
	Since $t$ is of order $u^{-2/\kappa+p}$ and $\tau$ is of order $u^{-2/\kappa}$, $L_u$ should be of order $u^{(-2/\kappa+p)(\kappa/\alpha-1)-2/\kappa}=u^{-\frac{2+p(\alpha-\kappa)}{\alpha}}$ as mentioned above.
\end{remark}
\begin{remark}
	1) For cases (i) and (ii) in the constant $\mathcal{H}^{\mathrm{par}}$ the value $ \frac{\alpha}{\kappa}L$ appears, while in \citet*[Theorem 2.2]{novikov2025sojourn} only $L$ appears due to some typos.
	
	2) For case (iii) in the constant $\mathcal{H}^{\mathrm{par}}$ the value $a^{1/\alpha}L$ appears, while in \citet*[Theorem 2.2]{novikov2025sojourn} only $L a^{1/\kappa}$ appears due to some typos.
\end{remark}
The most natural way to construct locally self-similar processes is to start from a Gaussian process $\widetilde{Y} \in \mathbf{S}(\widetilde{\alpha},\widetilde{\kappa},c_{\widetilde{Y}})$. This approach gives two corollaries from \Cref{mainth}.	
\begin{corollary}\label{corselfsim_2}
If $\widetilde{Y} \in \mathbf{S}\left(\alpha, \kappa, c_{\widetilde{Y}}\right)$, then
the asymptotics from \Cref{mainth} hold for 
\[
X(t)=
\begin{cases}
	\widetilde{Y}(1-t), t \in [0,1]\\
	0,t>1
\end{cases}
\] 
with $T=1$, $Y(t)=B_\kappa(t)$, $\alpha=\widetilde{\kappa}$, $\kappa=\widetilde{\kappa}$, $a=\begin{cases}
	c_{\widetilde{Y}}/2 \text{ for } \kappa<2,\\
	\frac{c_{\widetilde{Y}}-\alpha^2/4}{2 } \text{ for } \kappa=2 
\end{cases}$, $b=\alpha/2$, $\beta=1$. 
\end{corollary}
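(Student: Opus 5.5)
The plan is to check the hypotheses of \Cref{mainth} directly for $X(t)=\widetilde{Y}(1-t)$ and then substitute the parameters. The key tool is the self-similarity of $\widetilde{Y}$, which turns variograms near $1$ into the function $V_{\widetilde{Y}}$. For $x,y>0$ we have
\[
\mathrm{Var}\bigl(\widetilde{Y}(x)-\widetilde{Y}(y)\bigr)=x^{\alpha}\,\mathrm{Var}\bigl(\widetilde{Y}(1)-\widetilde{Y}(y/x)\bigr)=x^{\alpha}V_{\widetilde{Y}}(y/x).
\]
Taking $x=1-t$ and $y=1-s$ with $s,t\downarrow 0$, we get $y/x\to1$ and $1-y/x=(t-s)/(1-t)$. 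Hypothesis S\ref{LS2} then gives
\[
\mathrm{Var}\bigl(X(t)-X(s)\bigr)=c_{\widetilde{Y}}\abs{t-s}^{\kappa}(1+o(1)).
\]
This holds uniformly in the joint limit, because the $o(1)$ in S\ref{LS2} depends only on $y/x\to 1$.

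Next comes the variance. By S\ref{LS1}, $\sigma^2(t)=\mathrm{Var}(\widetilde{Y}(1-t))=(1-t)^{\alpha}$, so $\sigma(t)=(1-t)^{\alpha/2}$. Hence $\sigma(0)-\sigma(t)\sim \frac{\alpha}{2}t$, which is \eqref{sigmaE} with $b=\alpha/2$ and $\beta=1$. The maximum of $\sigma$ on $[0,1]$ is attained only at $t=0$.

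For the correlation I use the identity
\[
1-r_X(t,s)=\frac{\mathrm{Var}(X(t)-X(s))-(\sigma(t)-\sigma(s))^2}{2\sigma(t)\sigma(s)},
\]
together with $(\sigma(t)-\sigma(s))^2\sim\frac{\alpha^2}{4}(t-s)^2$ and $\sigma(t)\sigma(s)\to1$. This splits into two cases.
\begin{itemize}
\item If $\kappa<2$, the squared-difference term is $o(\abs{t-s}^{\kappa})$, so $1-r_X(t,s)\sim \frac{c_{\widetilde{Y}}}{2}\abs{t-s}^{\kappa}$.
\item If $\kappa=2$, both terms are of order $(t-s)^2$, so $1-r_X(t,s)\sim\frac{c_{\widetilde{Y}}-\alpha^2/4}{2}(t-s)^2$.
\end{itemize}
Since $V_{B_\kappa}(t,s)=\abs{t-s}^{\kappa}$, this is exactly \eqref{PickE} with $Y=B_\kappa$ and the stated $a$.

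The step I expect to be the main obstacle is positivity of $a$ when $\kappa=2$. The Cauchy--Schwarz inequality, i.e. $1-r\ge0$, only yields $c_{\widetilde{Y}}\ge\alpha^2/4$. Strict inequality has to come from non-degeneracy. If $c_{\widetilde{Y}}=\alpha^2/4$, then $1-r=o((t-s)^2)$, which forces the normalized process $X/\sigma$ to have locally vanishing increments. I would exclude this case, as an implicit assumption or via the correlation being $<1$ for $t\ne s$.

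It remains to place $B_\kappa$ in the framework. Since $B_\kappa\in\mathbf{S}(\kappa,\kappa,1)$, the self-similarity index in \Cref{mainth} is $\alpha=\kappa=\widetilde{\kappa}$ and $c_Y=1$. In particular $\alpha\le\kappa$, so one of cases (i) or (iii) applies, depending on how $\widetilde{\kappa}$ compares with $\min(1,2\gamma)$.

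Finally I must handle the definition $X=0$ on $(1,\infty)$ and the horizon $T=1$. Since $L_u\to0$, the Parisian window near $t=1$ only sees points where the variance is bounded away from $1$. There $\pk{\cdot}$ is $o(\Psi(u)u^{p})$ by Borell--TIS, so it does not affect \eqref{mainasymp}. Continuity of sample paths is inherited from $\widetilde{Y}$. Applying \Cref{mainth} with these parameters then yields the corollary.
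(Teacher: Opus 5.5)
Your proposal is correct and matches the argument the paper intends: the paper gives no separate proof, but the remark after Corollary~\ref{corselfsim} verifies \eqref{PickE} through the same identity $1-r_X=\frac{V_X-(\sigma_X(t)-\sigma_X(s))^2}{2\sigma_X(t)\sigma_X(s)}$, and \eqref{sigmaE} is read off from $\sigma(t)=(1-t)^{\alpha/2}$, where $\alpha$ is the parameter of $\widetilde Y$ and not the $\alpha=\widetilde\kappa$ of \Cref{mainth}. Your caveat for $\kappa=2$ is a genuine omission in the statement, and your heuristic can be made rigorous: $1-r_{\widetilde Y}$ depends only on the ratio of the times, so $c_{\widetilde Y}=\alpha^2/4$ forces $x\mapsto x^{-\alpha/2}\widetilde Y(x)$ to have zero $L^2$-derivative at every $x>0$, i.e.\ $\widetilde Y(t)=t^{\alpha/2}\widetilde Y(1)$, which is exactly the degenerate case that must be excluded.
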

\begin{corollary}\label{corselfsim}
	If $\widetilde{Y} \in \mathbf{S}\left(\widetilde{\alpha}, \widetilde{\kappa}, c_{\widetilde{Y}}\right)$, $V_{\widetilde{Y}}(x)$ is decreasing in some neighborhood of $0$, $V_{\widetilde{Y}}(x)$ attains its
	maximum on $[0,1]$ at the unique point $x=0$ and $1-V_{\widetilde{Y}}(x)\sim Rx^{\widetilde{\beta}}$ with $\widetilde{\beta} \geq 1$, and, in addition, the function $\frac{\frac{\partial}{\partial x}V_{\widetilde{Y}}(x) }{x^{\widetilde{\beta}-1}}$ is bounded, then
	the asymptotics from \Cref{mainth} hold for $X(t)=\widetilde{Y}(1)-\widetilde{Y}(t)$ with $T=1$,
	$Y(t)=\widetilde{Y}(t)$, $\alpha=\widetilde{\alpha}$,
	$\kappa=\widetilde{\kappa}$, $a=1/2$, $b=R/2$, $\beta=\widetilde{\beta}$.
\end{corollary}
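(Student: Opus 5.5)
The statement to prove is \Cref{corselfsim}. The plan is to check the hypotheses of \Cref{mainth} for $X(t)=\widetilde{Y}(1)-\widetilde{Y}(t)$ on $[0,1]$, with maximal-variance point $t_0=0$, and then invoke it.

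\textbf{Step 1: the variance condition \eqref{sigmaE}.} By definition, $\mathrm{Var}(X(t))=V_{\widetilde{Y}}(1,t)=V_{\widetilde{Y}}(t)$. We have $V_{\widetilde{Y}}(0)=\mathrm{Var}(\widetilde{Y}(1))=1$, since $\widetilde{Y}(0)=0$ by self-similarity. Hence
\[
\sigma(t)=\sqrt{V_{\widetilde{Y}}(t)}, \qquad \sigma(0)=1 .
\]
The maximum of $\sigma$ on $[0,1]$ is attained uniquely at $0$. From $1-V_{\widetilde{Y}}(t)\sim Rt^{\widetilde{\beta}}$ and $1-\sqrt{1-x}\sim x/2$ we get
\[
\sigma(0)-\sigma(t)\sim \tfrac{R}{2}\,t^{\widetilde{\beta}},
\]
which gives $b=R/2$ and $\beta=\widetilde\beta$.

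\textbf{Step 2: the Pickands condition \eqref{PickE} with $a=1/2$ and $Y=\widetilde Y$.} Write
\[
1-r_X(t,s)=\frac{\mathrm{Var}(X(t)-X(s))-(\sigma(t)-\sigma(s))^2}{2\sigma(t)\sigma(s)} .
\]
Since $X(t)-X(s)=\widetilde Y(s)-\widetilde Y(t)$, the numerator equals $V_{\widetilde Y}(t,s)-(\sigma(t)-\sigma(s))^2$, and the denominator tends to $2$ as $s,t\downarrow0$. It therefore suffices to show
\[
(\sigma(t)-\sigma(s))^2=o\bigl(V_{\widetilde Y}(t,s)\bigr), \qquad s,t\downarrow 0 .
\]
This is where the derivative hypothesis enters. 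By the mean value theorem and boundedness of $V_{\widetilde Y}'(x)/x^{\widetilde\beta-1}$,
\[
|\sigma(t)-\sigma(s)|\le C\,\max(s,t)^{\widetilde\beta-1}|t-s|\le C|t-s|
\]
for small $s,t$, using $\widetilde\beta\ge1$ and $\sigma$ bounded away from $0$.

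It remains to bound $V_{\widetilde Y}(t,s)$ from below. Take $s<t$ and use self-similarity with $t$ as the scale:
\[
V_{\widetilde Y}(t,s)=t^{\widetilde\alpha}\,V_{\widetilde Y}(1,s/t)=t^{\widetilde\alpha}V_{\widetilde Y}(s/t).
\]
We now compare with $(t-s)^2=t^2(1-s/t)^2$ in two regimes.
\begin{itemize}
\item If $s/t$ is near $1$, then S2 gives $V_{\widetilde Y}(s/t)\approx c\,(1-s/t)^{\widetilde\kappa}$. The ratio $(t-s)^2/V_{\widetilde Y}(t,s)$ is then of order $t^{2-\widetilde\alpha}(1-s/t)^{2-\widetilde\kappa}$.
\item Otherwise $V_{\widetilde Y}(s/t)$ is bounded away from $0$ by continuity and positivity away from $x=1$. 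The ratio is then $O(t^{2-\widetilde\alpha})$.
\end{itemize}
This needs $\widetilde\alpha<2$. For $\widetilde\alpha=2$, or $\widetilde\kappa=2$ with $\widetilde\alpha\ge2$, I would sharpen the bound using the factor $\max(s,t)^{\widetilde\beta-1}$ together with the fact that $\sigma$ is flat near $0$ (its decrement is $O(t^{\widetilde\beta})$). I expect this degenerate-exponent bookkeeping to be the \textbf{main obstacle}. For the $s$-small-relative-to-$t$ regime I would instead use $|\sigma(t)-\sigma(s)|\le Ct^{\widetilde\beta}$ directly, which is $o(t^{\widetilde\alpha/2})$ whenever $\widetilde\beta>\widetilde\alpha/2$.

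\textbf{Step 3: conclusion.} Having shown $1-r_X(t,s)\sim\frac12 V_{\widetilde Y}(t,s)$, $X$ is locally self-similar at $0$ with parameters $a=1/2$ and $Y=\widetilde Y\in\mathbf S(\widetilde\alpha,\widetilde\kappa,c_{\widetilde Y})$. Together with Step 1, \Cref{mainth} applies on $T=1$ and yields the stated asymptotics.
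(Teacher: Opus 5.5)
There is a genuine gap in Step 3. You conclude $1-r_X(t,s)\sim\frac12 V_{\widetilde Y}(t,s)$ unconditionally, but your Step 2 only handles the regime $s\ll t$ under $\widetilde\beta>\widetilde\alpha/2$. The complementary case is not a matter of exponent bookkeeping, because there the claim is false.

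Since $|\sigma(t)-\sigma(0)|\le \mathrm{Var}(X(t)-X(0))^{1/2}=t^{\widetilde\alpha/2}$, the hypotheses force $\widetilde\beta\ge\widetilde\alpha/2$. So the missing case is exactly $\widetilde\beta=\widetilde\alpha/2$, which also forces $R\le 2$. Taking $s=0$ in that case gives
\[
1-r_X(t,0)=\frac{t^{\widetilde\alpha}-(1-\sigma(t))^2}{2\sigma(t)}\sim \frac{1-R^2/4}{2}\,t^{\widetilde\alpha}=\frac{1-R^2/4}{2}\,V_{\widetilde Y}(t,0),
\]
so \eqref{PickE} with $a=1/2$ fails.

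This case actually occurs. In the paper's negative sub-fractional Brownian motion example with $\alpha=4$, the covariance is $t^2s^2$, so $\widetilde Y(t)=t^2\xi$ with $R=2$, $\beta=2=\alpha/2$, $\kappa=2$. There $r_X\equiv 1$, so the sharpening you plan for $\widetilde\kappa=2$, $\widetilde\alpha\ge 2$ cannot succeed.

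The missing idea is Remark~\ref{bigalpha}. When $\widetilde\beta=\widetilde\alpha/2$ you have $\widetilde\alpha>\widetilde\beta\ge\min(\beta,2\gamma)$. Then Theorem~\ref{mainth} (case (iii), degenerate constant) only needs $\limsup_{t,s\downarrow 0}(1-r_X(t,s))/V_{\widetilde Y}(t,s)<\infty$. This follows by dropping the subtracted square in your own formula: $(1-r_X(t,s))/V_{\widetilde Y}(t,s)\le 1/(2\sigma(t)\sigma(s))\to 1/2$. This is exactly how the paper splits the argument: $\beta\le\alpha/2$ goes through Remark~\ref{bigalpha}, and $\beta>\alpha/2$ goes through \eqref{PickE} with $a=1/2$.

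For $\widetilde\beta>\widetilde\alpha/2$ your strategy agrees with the paper's. However, the obstacle you anticipate disappears if you keep the factor that the derivative hypothesis gives you instead of discarding it. For $s<t$ and $x=s/t$, the mean value theorem gives $|\sigma(t)-\sigma(s)|\le Ct^{\widetilde\beta-1}(t-s)$. Hence
\[
\frac{(\sigma(t)-\sigma(s))^2}{V_{\widetilde Y}(t,s)}\le C^2\,t^{2\widetilde\beta-\widetilde\alpha}\,\frac{(1-x)^2}{V_{\widetilde Y}(x)}.
\]
The factor $(1-x)^2/V_{\widetilde Y}(x)$ is bounded on $[0,1)$, because $\widetilde\kappa\le 2$ and $V_{\widetilde Y}(x)\ge c(1-x)^{\widetilde\kappa}$ there (by S2 near $x=1$ and positivity elsewhere). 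This single estimate covers every regime uniformly. That includes $\widetilde\alpha>2$ with $\widetilde\kappa<2$ and $s/t\to 1$, which your crude bound $C|t-s|$ does not handle and which is not among the cases you propose to sharpen.
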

\begin{remark}
	If $\beta\leq\alpha/2$, then we cannot ensure the condition \eqref{PickE}. However, this case falls under \Cref{bigalpha}. Therefore, since, as $t,s \to 1$,
	\[
		\frac{1-r_X(t,s)}{V_{\widetilde{Y}}(t,s)}
		=\frac{1}{2} \cdot \frac{V_X(t,s)-(\sigma_X(t)-\sigma_X(s))^2}{V_{\widetilde{Y}}(t,s)\sigma_X(t)\sigma_X(s)}  
		\leq 
		\frac{V_X(t,s)}{2 V_{\widetilde{Y}}(t,s)\sigma_X(t)\sigma_X(s)}=\frac{1}{2}+o(1),
	\]
	\Cref{mainth} is applicable, allowing us to deduce Corollary \ref{corselfsim}.

	If, in turn, $\beta>\alpha/2$, then one can check that
	\[
		\frac{(\sigma_X(t)-\sigma_X(s))^2}{V_{\widetilde{Y}}(t,s)\sigma_X(t)\sigma_X(s)} \to 0,\, t,s \to 0,
	\]
	hence, \eqref{PickE} holds with $a=1/2$ and again \Cref{mainth} is applicable.
\end{remark}
\ET

\section{Examples}
In this section similarly to \citet*{novikov2025sojourn}
we consider different possible examples of $Y$ in Corollary \ref{corselfsim}. 
\k1{ Below $R_Y$ stands for the covariance function of the Gaussian process $Y$.}
\cEE{The set of examples considered here appear in \citet*{tabis}, \citet*{MR4206416} and \citet*{novikov2025sojourn}, therefore we omit some details.}
\BEX
For $\alpha \in (1,2)$ consider the covariance function
$$
R_Y(t,s)\coloneq\frac{(t+s)^{\alpha}-|t-s|^{\alpha}}{2^{\alpha}}.
$$

In view of \citet*{tabis} $\beta=1,R=\alpha 2^{2-\alpha}, Y \in \mathbf{S}(\alpha,\alpha,2^{1-\alpha})	$. Let $\gamma=\beta/2=1/2$, then \eqref{mainasymp} holds with $L \geq 0$, $L_u=Lu^{-2}$, $p=0$ and \ksn{by \eqref{degconst}} $c=e^{-\alpha 2^{1-\alpha}L-dL^{1/2}}$.
\EEX

\BEX
Sub-fractional Brownian motion with parameter $\alpha \in \ksn{(1,2)}$ is a centered Gaussian process with covariance function
$$
R_Y(t,s)\coloneq\frac{1}{2-2^{\alpha-1}}
\left(t^{\alpha}+s^{\alpha}-
\frac{(t+s)^{\alpha}+|t-s|^{\alpha}}
{2}\right).
$$
In view of \citet*{tabis} $\beta=\alpha,R=\frac{2^{\alpha-1}}{2-2^{\alpha-1}}, Y \in \mathbf{S}(\alpha,\alpha,(2-2^{\alpha-1})^{-1})	$. Let $\gamma=\beta/2$, then \eqref{mainasymp} holds with $L \geq 0$, $L_u=Lu^{-2/\alpha}$, $p=0$ and $c=\mathcal{H}^{\mathrm{par},h}_{Y,\ksn{\infty},2^{-1/\alpha}L}$, where \\$h(t)=\frac{2^{\alpha-1}}{2-2^{\alpha-1}}t^{\alpha}+\sqrt{2}dt^{\alpha/2}$.
\EEX
\begin{remark}
	The condition $\alpha \geq 1$, which corresponds to the requirement $\beta \geq 1$ in \Cref{corselfsim}, was mistakenly not included in Example 3.2 of \citet*{novikov2025sojourn}.
\end{remark}

\BEX
Negative sub-fractional Brownian motion $Y$ with parameter \ksn{$\alpha \in(2,4]$} is a centered Gaussian process with covariance function
$$
R_Y(t, s)\coloneq\frac{1}{2^{\alpha-1}-2}\left(\frac{(t+s)^\alpha+|t-s|^\alpha}{2}-t^\alpha-s^\alpha\right) .
$$
In view of \citet*{tabis}	
$\beta=2,R=\frac{\alpha(\alpha-1)}{2^{\alpha-1}-2},Y \in \mathbf{S}(\alpha,2,\frac{\alpha(\alpha-1)2^{\alpha-3}}{2^{\alpha-1}-2} )$. Let $\gamma=\beta/2=1$, then
\eqref{mainasymp} holds with $L_u=Lu^{-1}$, $p=0$ and \ksn{by \eqref{degconst}} \[c=e^{-\frac{\alpha(\alpha-1)}{2^{\alpha}-4}L^2-dL}.\]
\EEX
\COM{
	\begin{remark}
		The condition $\alpha < 4$, which corresponds to the requirement $\beta > \alpha/2$ in \Cref{corselfsim}, was not included in Example 3.3 of \citet*{novikov2025sojourn}. However, the case $\beta=\alpha/2$ corresponds to the case $(iii)$ of \Cref{mainth}, and in this case it is actually not required that $1-r_X(t,s) \sim a V_Y(t,s)$, where $r_X$ is the correlation function of $X$. It is enough to get an upper bound on $1-r_X(t,s)$.
	\end{remark}
}
\BEX Weighted fBm $Y$ with parameters $\kappa \in(0,2]$ and $a>1$ is a centered Gaussian process with covariance function
$$
R_Y(t, s)\coloneq\frac{\Gamma(a+\kappa)}{2 \Gamma(a) \Gamma(\kappa)} \int_0^{\min (t, s)} u^{a-1}\left((t-u)^{\kappa-1}+(s-u)^{\kappa-1}\right) d u .
$$
In view of \citet*{tabis}
$\beta = a,R=\frac{\Gamma(a+\kappa)}{\Gamma(a+1)\Gamma(\kappa)}, Y \in \mathbf{S}(a+\kappa-1,\kappa,\frac{\Gamma(a+\kappa)}{\Gamma(a)\Gamma(\kappa+1)}) 	$. Let $\gamma=\frac{\beta}{2}=\frac{a}{2}$, denote $\alpha\coloneq a+\kappa-1.$

If $\kappa>1$, then $\alpha>\beta$ and
\eqref{mainasymp} holds with $L_u=Lu^{-2/a}$, $p=0$ and \ksn{by \eqref{degconst}} \[c=e^{-\frac{\Gamma(a+\kappa)}{2\Gamma(a+1)\Gamma(\kappa)}L^a-dL^{a/2}}.\]

If $\kappa =1$, then $Y(t)$ has the same distribution as $B_1(t^a)$, $\alpha=\beta$ and \eqref{mainasymp} holds with $L \geq 0$, $L_u=Lu^{-2/a}$, $p=0$ and $c=\mathcal{H}_{Y,\infty,2^{-1/a}L}^{\mathrm{par},h}$, where $h(t)=t^a+\sqrt{2}dt^{a/2}$.

If $\kappa<1$, then $\alpha<\beta,\,\alpha>\kappa$, so \eqref{mainasymp} holds with \[L_u=Lu^{-\frac{2}{a+\kappa-1}-\epsilon},\epsilon \in \Big(0,\frac
{2(a-1)(1-\kappa)}
{ (a+\kappa-1)a \kappa}\Big],p=\epsilon \frac{a+\kappa-1}{a-1}
\] and

\begin{align*}
c&=&\left(\frac{\Gamma(a+\kappa)}{2\Gamma(a)\Gamma(\kappa+1)}\right)^{1/\kappa}
\int\limits_{0}^{\infty} 
\exp\left(-d z^{a/2}
-\frac{\Gamma(a+\kappa)}{2\Gamma(a+1)\Gamma(\kappa)} z^{a}\right)z^{\frac{a-1}{\kappa}}
\\
& \times \mathcal{H}^{\mathrm{par}}_{\left(\frac{\Gamma(a+\kappa)}{2\Gamma(a)\Gamma(\kappa+1)}\right)^{1/\kappa}\frac{a+\kappa-1}{\kappa} L  z^{\frac{a-1}{\kappa}}} dz,
\end{align*}
with $L \geq 0$ if $\epsilon = \frac
{2(a-1)(1-\kappa)}
{ \kappa a(a+\kappa-1)}$, and

\begin{align*}
c=\left(\frac{\Gamma(a+\kappa)}{2\Gamma(a)\Gamma(\kappa+1)}\right)^{1/\kappa}
\int\limits_{0}^{\infty} 
z^{\frac{a-1}{\kappa}}
\mathcal{H}^{\mathrm{par}}_{\left(\frac{\Gamma(a+\kappa)}{2\Gamma(a)\Gamma(\kappa+1)}\right)^{1/\kappa}\frac{a+\kappa-1}{\kappa} L  z^{\frac{a-1}{\kappa}}} dz,
\end{align*}
with $L>0$ if $0 <\epsilon <\frac
{2(a-1)(1-\kappa)}
{ \kappa a (a+\kappa-1) }$,
\[
c=\mathcal{H}^{\mathrm{par},h}_{Y,\infty,2^{-1/(a+\kappa-1)}L},
\]
where $h(t)=2^{\frac{1-\kappa}{a+\kappa-1}}\frac{\Gamma(a+\kappa)}{\Gamma(a+1)\Gamma(\kappa)}t^a+2^{\frac{a}{2(a+\kappa-1)}}d t^{a/2}$ with $L>0$ if $\epsilon=0$.
\EEX

\BEX
Integrated fBm $Y$ with parameter $\alpha \in(0,2]$ is a Gaussian process defined as
\[
Y(t)=\sqrt{\alpha+2}\int_0^t B_\alpha(s) ds.
\]

Its
covariance function is a function of the form
$$
R_Y(t, s)=\frac{(\alpha+2)\left(s^{\alpha+1} t+s t^{\alpha+1}\right)+|t-s|^{\alpha+2}-t^{\alpha+2}-s^{\alpha+2}}{2(\alpha+1)} .
$$
In view of \citet*{tabis} $Y \in \mathbf{S}(\alpha+2,2,\alpha+2)$ and	 $\beta=\alpha+1$ if $\alpha \leq 1$ and $\beta = 2$, if $\alpha \geq 1$. Let $\gamma = \beta/2$, then for some $b>0$ \eqref{mainasymp} holds with $L_u=Lu^{-\max\left(1,\frac{2}{\alpha+1}\right)}$, $p=0$ and \ksn{by \eqref{degconst}} \[c=e^{-bL^{\min\left(\alpha+1,2\right)}-dL^{\min\left((\alpha+1)/2,1\right)}}.\]
\EEX

\BEX
Time-average of fBm $Y$ with parameter $\alpha \in [1,2]$ is a Gaussian process defined as
$$
Y(t)\coloneq\sqrt{\alpha+2} \frac{1}{t} \int_0^t B_\alpha(s) d s .
$$
In view of \citet*{tabis} $\beta=1$, $Y \in \mathbf{S}(\alpha,2,1)$,	 and $R=\alpha/2+1$, if $\alpha>1$, $R=2$, if $\alpha=1$. Let $\gamma=\beta/2=1/2$. 
If $\alpha>1$, then \eqref{mainasymp} holds with $L_u=Lu^{-2}$, $p=0$ and \ksn{by \eqref{degconst}}
$c=e^{-\left(\frac{\alpha+2}{4} \right)L-dL^{1/2}}$. 

If $\alpha =1$, then \eqref{mainasymp} holds with $p=0$ and $c=\mathcal{H}^{\mathrm{par},h}_{Y,\infty,L/2}$, where
$h(t)=2t+\sqrt{2}dt^{1/2}$.
\EEX

\BEX
Dual fBm $Y$ with parameter $\alpha \in [1,2]$ is a centered
Gaussian process with covariance function 
$$
R_Y(t,s)\coloneq\frac{t^{\alpha}s+s^{\alpha}t}{t+s}.
$$
In view of \citet*{tabis} $Y \in \mathbf{S}(\alpha,2,\alpha/2)$ and $\beta=1$ and $R=2$, if $\alpha>1$, $R=3$, if $\alpha=1$. Let $\gamma=\beta/2=1/2$. 

If $\alpha>1$, then \eqref{mainasymp} holds with $L_u=Lu^{-2}$, $p=0$ and \ksn{by \eqref{degconst}} $c=e^{-L-dL^{1/2}}$.

If $\alpha =1$, then \eqref{mainasymp} holds with $L_u=Lu^{-2}$, $p=0$, $c=\mathcal{H}^{\mathrm{par},h}_{Y,\infty,L/2}$, where $h(t)=3t+\sqrt{2}dt^{1/2}$.
\EEX

\def\Esu{E(u,n)}

\section{Auxiliary lemmas}
In the sequel we derive some results needed in the proofs below. 

\subsection{A new Pickands lemma}

We shall start with formulating the \emph{uniform Pickands lemma with multiple limits} mentioned in the Introduction in full generality considering 
a family of centered Gaussian processes $\xi_{u,j}(\vk{t}),  \vk{t}\in \KK, \ j\in S_u, \ {u\geq 0}$
with continuous sample paths and variance function $\sigma_{u,j}^2$ defined over the parameter space  $\KK,$ which is a \ksn{compact} subset of $\R^k, k\inn$ that contains the origin of $\R^k$. 

The motivation for such a general formulation stems from the need of $\vk{t}$ being vector-valued in some ruin problems of actuarial relevance going beyond the scope of this paper (for example, the double crossing problem considered in \cite{Ievlev2024}).

Denote by $C(E)$ \cEE{the Banach space of  all continuous functions $f: E \to \R$} equipped with supremum norm.
Write $\norm{\cdot}$ for a norm on $\R^k$ and set \[\omega_f(\delta)\coloneq\sup_{\norm{\vk{t}-\vk{s}} \leq \delta} \abs{ f(\vk{t})-f(\vk{s})}.\] The following assumptions on the functional  $\Gamma: C(E)\rw \R$ are the same as the ones appearing in \citet*{Uniform2016}, only F3 slightly differs from their F2:
\begin{enumerate}[F1]
	\item \label{f0} $\Gamma$ is continuous; 
	\item  \label{f1} $\Gamma(af+b)=a\Gamma(f)+b$ holds for all $f\in C(E),$ all \cEE{ $a>0$ and all $b\in\mathbb{R}$};
	\item   \label{f2} There exist $c_1,c_2>0$ such that 
	$$\Gamma(f)\leq \max(c_1\sup_{\vk{s}\in E}f(\vk{s}),c_2), \ \ \forall f\in C(E).$$
\end{enumerate}

\cEE{Hereafter $\Psi=1- \Phi$, with $\Phi$ the distribution function of a $N(0,1)$ random variable. }

For a random variable $Z$, we set $\overline{Z}\coloneq\frac{Z}{\sqrt{\mathrm{Var}(Z)}}$ if $\mathrm{Var}(Z)>0$.

We state next our first result.
\BT\label{the-weak-conv}
Let $\xi_{u,j}( \vk{s}), \vk{s}\in E ,j\in S_u, {u\geq 0}$ be a family of centered Gaussian processes defined as above and suppose that $\Gamma$ satisfies \Cref{f0}-\Cref{f2}. \cEE{Assume the validity of the following conditions}
\begin{enumerate}[C1]
	\item 
	Let \label{C0}$g_{u,j}, j\in S_u$ be a sequence of deterministic functions of $u$ satisfying
	\begin{align*}
	\lim_{u\to\IF}\inf_{j\in S_u}g_{u,j}=\IF.
	\end{align*}
	\item \label{C1}
	Suppose that $\mathrm{Var}(\xi_{u,j}(0))=1$ for all large $u, j\in S_u$ and
	there exist bounded continuous functions $h_{u,j}$ on $ \KK$ such that 
	for some $u_0>0$ and $M>0$ 
	$$
	\sup_{u \geq u_0,j \in S_u} |h_{u,j}(0)| \leq M , \quad 	\lim_{\delta \downarrow 0}\sup_{u \geq u_0} \sup_{j \in S_u} \omega_{h_{u,j}}(\delta) =0
	$$
	such that
	\begin{align}\label{assump-cova-field}
	\lim_{u\to\IF}\sup_{\vk{s}\in E ,j\in S_u}\left|g_{u,j}^2\LT( 1-\sigma_{u,j}(\vk{s}) \RT) - h_{u,j}(\vk{s})\right| =0.
	\end{align}	
	\item \label{C2} There exist centered Gaussian processes  $\zeta_{u,j}(\vk{s}),\vk{s}\in \R^k$ with a.s. continuous sample paths and \ksn{$\zeta_{u,j}(\vk{0})=0$} such that
	\begin{align}\label{C21}
		\lim_{u\to\IF}\sup_{s, s'\in E, j\in S_u}\abs{g_{u,j}^2\big(\mathrm{Var}(\overline\xi_{u,j}(\vk{s})-\overline\xi_{u,j}(\vk{s}'))\big) - 2\mathrm{Var}(\zeta_{u,j}(\vk{s})-\zeta_{u,j}(\vk{s}'))} =0.
	\end{align}
	\item  \label{C3} There exist positive constants $C, \nu, u_0$ such that
	\begin{align}\label{assump-holder-field}
	\sup_{j\in S_u} g_{u,j}^2\mathrm{Var}(\overline\xi_{u,j}(\vk{s})-\overline\xi_{u,j}(\vk{s}')) \leq C \norm{\vk{s}-\vk{s}'}^\nu, \quad 
	\sup_{j\in S_u} \mathrm{Var}(\zeta_{u,j}(\vk{s})-\zeta_{u,j}(\vk{s}')) \leq C \norm{\vk{s}-\vk{s}'}^\nu
	\end{align}
	holds for all $\vk{s},\vk{s}'\in \KK , u\geq u_0$.
\end{enumerate}
If further 
$\pk*{  \Gamma(\xi_{u,j})> g_{u,j}} > 0$ for  all large
$u$ and all $j \in S_u$, then 
\begin{align}\label{con-uni-con}
\lim_{u\to\IF}\sup_{j\in S_u} \ABs{ \frac{\pk*{ \Gamma(\xi_{u,j})>g_{u,j} } } {\Psi(g_{u,j})} -\E{ e^{ 			\Gamma(\sqrt{2}\zeta_{u,j}-\mathrm{Var}(\zeta_{u,j})-h_{u,j})}}}=0. 
\end{align} 
Moreover, if 
\begin{align}
\label{bddvar}
\sigma^2:=\sup_{\vk{t} \in E,j \in S_u, u \geq u_0}\mathrm{Var}(\zeta_{u,j}(\vk{t})) < \infty,
\end{align}
then
\begin{align}\label{bddconst}
\sup_{u \geq u_0,j\in S_u} \E*{
	e^{
		\Gamma(\sqrt{2}\zeta_{u,j}-\mathrm{Var}(\zeta_{u,j})-h_{u,j})
	}
}<\infty.
\end{align}
\ET

Define the following constant
\bqn{   
	\label{Pickands_const}
	\mathcal{H}^{\mathrm{par},h}_{\zeta,T,L} \coloneq\E*{\sup_{t \in [0,T]}
		\inf_{s \in [0,L]}
		e^{\sqrt{2}\zeta(t+s)-\mathrm{Var}(\zeta(t+s))-h(t+s)}}.
}

Applied to the case of Parisian functional \Cref{the-weak-conv} implies: 

\begin{corollary}
	\label{the-weak-conv-cor}
	Under the assumptions and the notation of \Cref{the-weak-conv} (with $E=[0,T+L]$)
	\cEE{if for  all} large
	$u$ and all $j \in S_u$
	$\pk*{  \sup_{t \in [0,T]} \inf_{s \in [0,L]}\xi_{u,j}(t+s)> g_{u,j}} > 0$ for some $T$ positive and $L$ non-negative, then
	\begin{align}\label{con-uni-con-cor}
	\lim_{u\to\IF}\sup_{j\in S_u} \left| \frac{\pk*{ \sup_{t \in [0,T]} \inf_{s \in [0,L]}\xi_{u,j}(t+s)>g_{u,j} } } {\Psi(g_{u,j})} -\mathcal{H}^{\mathrm{par}, h_{u,j}}_{\zeta_{u,j},T,L} \right| = 0.
	\end{align}
\end{corollary}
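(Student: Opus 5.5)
The corollary follows by applying \Cref{the-weak-conv} with $k=1$, $E=[0,T+L]$ and the Parisian functional
\[
\Gamma(f)\coloneq\sup_{t\in[0,T]}\inf_{s\in[0,L]} f(t+s), \qquad f\in C([0,T+L]).
\]
Once $\Gamma$ is shown to satisfy \Cref{f0}--\Cref{f2}, the limit \eqref{con-uni-con} is exactly \eqref{con-uni-con-cor}. It only remains to rewrite the expectation in \eqref{con-uni-con}.

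We first verify the three properties of $\Gamma$.

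For \Cref{f0}, note that $f\mapsto \inf_{s\in[0,L]}f(\cdot+s)$ is $1$-Lipschitz from $C([0,T+L])$ to $C([0,T])$ in the supremum norm. Since the supremum over $[0,T]$ is also $1$-Lipschitz, we get $|\Gamma(f)-\Gamma(g)|\le\norm{f-g}_\infty$, so $\Gamma$ is continuous.

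For \Cref{f1}, if $a>0$ and $b\in\R$, then both $\inf$ and $\sup$ commute with the map $x\mapsto ax+b$. Hence $\Gamma(af+b)=a\Gamma(f)+b$.

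For \Cref{f2}, we have $\Gamma(f)\le \sup_{t\in[0,T]} f(t)\le \sup_{s\in E}f(s)$. This bound is at most $\max(c_1\sup_E f,c_2)$ with $c_1=1$ and any $c_2>0$.

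The positivity hypothesis $\pk{\Gamma(\xi_{u,j})>g_{u,j}}>0$ is assumed in the corollary. Together with the standing conditions \Cref{C0}--\Cref{C3}, this means all hypotheses of \Cref{the-weak-conv} hold.

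Next we identify the limit. Set $W_{u,j}\coloneq\sqrt{2}\zeta_{u,j}-\mathrm{Var}(\zeta_{u,j})-h_{u,j}$. The map $x\mapsto e^x$ is continuous and strictly increasing, so it commutes with the pointwise $\inf$ and $\sup$ (the infimum over a compact set of a continuous function is attained). Therefore
\[
e^{\Gamma(W_{u,j})}=\sup_{t\in[0,T]}\inf_{s\in[0,L]}e^{W_{u,j}(t+s)}.
\]
Taking expectations gives $\E{e^{\Gamma(W_{u,j})}}=\mathcal{H}^{\mathrm{par},h_{u,j}}_{\zeta_{u,j},T,L}$ by \eqref{Pickands_const}. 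Substituting this into \eqref{con-uni-con} yields \eqref{con-uni-con-cor}.

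No step here is a real obstacle; the only care needed is the continuity check in \Cref{f0}, which the Lipschitz bound above settles.
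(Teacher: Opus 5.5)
Your proof is correct and follows the paper's strategy of applying Theorem~\ref{the-weak-conv} with the Parisian functional. The paper instead lifts to the two-dimensional index set $[0,T]\times[0,L]$ via $\widetilde{\xi}_{u,j}(t,s)=\xi_{u,j}(t+s)$ and uses $\Gamma(f)=\sup_{t}\inf_{s}f(t,s)$ there, so C1--C4 must be transferred to the lifted fields; you keep $E=[0,T+L]$ and absorb the shift into $\Gamma$, so the hypotheses apply verbatim, and you also verify F1--F3 explicitly, which the paper leaves implicit.
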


\subsection{Additional lemmas}
We shall assume that 
\cEE{$Y \in     \mathbf{S}(\alpha,\kappa,c_{Y})$.}\\
Recall that (see the definitions \eqref{betahat}, \eqref{Zhat}) 
\[
\hat{\beta}=\frac{\beta \kappa}{\alpha},\;\hat{\gamma}=\frac{\gamma \kappa}{\alpha},\;\widehat{Y}(t)=Y(t^{\kappa/\alpha}).
\]

\COM{
\cEE{Below we consider a shorter interval that depends on $u$. This is necessary 
	since }for $t\gg u^{\min\left(-1/\hat{\gamma},-2/\hat{\beta},-2/\kappa\right)+p}$ in \Cref{the-weak-conv-cor} we are not able to \cEE{obtain} a limiting Gaussian process.
}

Denote 
\begin{align}
\label{lut}
\widehat{L}_{u,t}\coloneq(t^{\kappa/\alpha}+L_u)^{\alpha/\kappa}-t.
\end{align}

Let $X(t),\,t \geq 0$ be locally self-similar at $0$ with parameters $a$ and $Y$ and satisfy \eqref{sigmaE} as in the statement of \Cref{mainth}.

For a random variable $Z$, we set $\overline{Z}=\frac{Z}{\sqrt{\mathrm{Var}\left(Z\right)}}$ if $\mathrm{Var}\left(Z\right)>0$.

\BEL \label{Lem25}
Suppose that $ \liminf_{u \to \infty} f(u)/u, \limsup_{u \to \infty} f(u)/u \in (0,\infty)$. There exist absolute constants $\delta, F, G > 0 $ such that

\begin{align*} 
	\pk*{ 
		\begin{aligned}
			&
			\sup_{t \in [A,A+T]u^{-2/\kappa}} \overline{X}(t^{\kappa/\alpha}) > f(u) ,
			\\&
			\sup_{t \in [t_0,t_0+T]u^{-2/\kappa}} \overline{X}(t^{\kappa/\alpha}) > f(u) 
		\end{aligned}
	} 
	\leq \; F T^2 \exp \left( - G (t_0-(A+T))^\kappa \right) \Psi(f(u)) 
\end{align*}
for all $T \geq 1$, $ t_0 > A + T > A > 0 $ and any $ u \geq u_0 = (4\delta)^{-\kappa/2} (t_0 + T)^{\kappa/2} $, i.e. $ (t_0 + T)u^{-2/\kappa} \leq 4\delta $.
\\
\EEL

\BEL \label{Lem27} 
There exist absolute constants $\delta, F, G > 0 $ such that 
\begin{align*}
	\pk*{
		\begin{aligned}
			&
			\sup_{t \in [A,A+T]u^{-2/\kappa}} \overline{X}(t^{\kappa/\alpha}) > u , 
			\\
			&
			\sup_{t \in [A+T,A+2T]u^{-2/\kappa}} \overline{X}(t^{\kappa/\alpha}) > u
		\end{aligned}
	}
	\leq \; F  \left( T^2 \exp \left( - G \sqrt{T^\kappa} \right) + \sqrt{T} \right) \Psi(u) \;  
\end{align*}
for all $ A> 0 $, $T\geq 1$ and any $ u \geq u_0 = (4\delta)^{-\kappa/2} (A+2T)^{\kappa/2} $, i.e. $ (A+2T)u^{-2/\kappa} \leq 4\delta$.
\\
\EEL

\BEL\label{betacut}
\begin{enumerate}[(i)]
	\item If  $\alpha \leq \beta$, then for all small enough $\delta>0$
	\bqn{ \lim_{M \to \infty} \underset{u \to \infty}{\limsup}\; \frac{1}{u^{2/\kappa-2/\hat{\beta}}\Psi(u)}\pk*{ \underset{t \in [Mu^{-2/\hat{\beta}},\delta]}{\sup} X(t^{\kappa/\alpha})>u}  &=& 0;
	}
	
	\item If  $\alpha>\beta$, then for all small enough $\delta>0$  
	\bqn{ \lim_{M \to \infty} \underset{u \to \infty}{\limsup}\; \frac{1}{\Psi(u)} \pk*{ \underset{t \in [Mu^{-2/\hat{\beta}},\delta]}{\sup} X(t^{\kappa/\alpha})>u} &=& 0.
	}
\end{enumerate}
\EEL

\BEL\label{gammacut}
\begin{enumerate}[i)] 
	\item  If $\alpha \leq 2\gamma$, then for all small enough $\delta>0$
	\bqn{ \limit{M}\underset{u \to \infty}{\limsup}\; \frac{1}{u^{2/\kappa-1/\hat{\gamma}}\Psi(u)}\pk*{\underset{t \in [Mu^{-1/\hat{\gamma}},\delta]}{\sup}\left( \overline{X}(t^{\kappa/\alpha})-u-dt^{\hat{\gamma}}\right)>0} = 0;
	} 
	
	\item  If $\alpha > 2\gamma$, then for all small enough $\delta>0$
	\bqn{ \limit{M}\underset{u \to \infty}{\limsup}\; \frac{1}{\Psi(u)}\pk*{\underset{t \in [Mu^{-1/\hat{\gamma}},\delta]}{\sup}
			\left(
			\overline{X}(t^{\kappa/\alpha})-u-dt^{\hat{\gamma}}
			\right)
			>0} = 0. 
	}
\end{enumerate}
\EEL

\BEL\label{kappacut}
If $\alpha>\kappa$, then for $\epsilon \in [0,2/\kappa-2/\alpha)$ and $L_u=Lu^{-2/\alpha-\epsilon}$ we have   
\bqn{ \limit{\cEE{M}}\underset{u \to \infty}{\limsup}\;\frac{1}{\Psi(u)u^{\frac{\epsilon\alpha}{\alpha-\kappa}}}\pk*{\underset{t \in [\cEE{M} u^{-2/\kappa+\frac{\epsilon\alpha}{\alpha-\kappa}},\delta]}{\sup} \underset{s \in [0,1]}{\inf} \overline{{X}}((t+\widehat{L}_{u,t}s)^{\cEE{\kappa/\alpha}})>u} = 0.
}
\EEL

\BEL\label{smallcut}
For each $p \in (0,2/\kappa)$ we have 
\begin{align*}
\lim_{M \to \infty}
\underset{u \to \infty}{\limsup}\; \frac{1}{u^{p}\Psi\left(u\right)}\pk*{\underset{t \in [0,\frac{1}{M}u^{-2/\kappa+p}]}{\sup} \overline{X}(t^{\kappa/\alpha})>u} = 0.
\end{align*}
\EEL
For the proofs of Lemmas \ref{Lem25}, \ref{Lem27}, \ref{betacut}, \ref{gammacut}, \ref{smallcut} see \citet*{novikov2025sojourn}. Remark that for the cases of \Cref{mainth} where $p>0$ (that is, cases (i) and (ii)) we have
\begin{align}
\label{lup}
L_u = 
L u^{-\frac{2+p(\alpha-\kappa)}{\alpha}},\; 
p
=
\begin{cases}
	&
	\min\left(2/\kappa-2/\hat{\beta},2/\kappa-1/\hat{\gamma},\frac{\epsilon \alpha}{\alpha-\kappa} \right),\,\alpha>\kappa,\\
	&
	\min\left(2/\kappa-2/\hat{\beta},2/\kappa-1/\hat{\gamma} \right),\,\alpha \leq \kappa,
\end{cases}
\end{align}

and therefore, Lemmas \ref{betacut}-\ref{kappacut} imply
\begin{align}
\label{largecut}
\lim_{M \to \infty}
\underset{u \to \infty}{\limsup}\; \frac{1}{u^{p}\Psi\left(u\right)}\pk*{\underset{t \in [Mu^{-2/\kappa+p},\delta]}{\sup}
	\underset{s \in [0,1]}{\inf}
	\left( X((t+\widehat{L}_{u,t}s)^{\cEE{\kappa/\alpha}})
	-d(t+\widehat{L}_{u,t}s)^{\hat{\gamma}}
	\right)>u} = 0.
\end{align}

For the cases of \Cref{mainth} where $p=0$ (that is, case
(iii)) Lemmas \ref{betacut}-\ref{kappacut} imply
\begin{align}
\label{largecut2}
\lim_{M \to \infty}
\underset{u \to \infty}{\limsup}\; \frac{1}{\Psi\left(u\right)}\pk*{\underset{t \in [Mu^{\min(-2/\kappa,-2/\hat{\beta},-1/\hat{\gamma})},\delta]}{\sup}
	\underset{s \in [0,1]}{\inf}
	\left( X((t+\widehat{L}_{u,t}s)^{\cEE{\kappa/\alpha}})
	-d(t+\widehat{L}_{u,t}s)^{\hat{\gamma}}
	\right)>u} = 0.
\end{align}

\section{Proofs}
\begin{proof}[Proof of \Cref{the-weak-conv}]
	Recall the notation 
	\[
		\omega_f(\delta):=
		\sup_{\vk{t},\vk{s} \in E,\,\norm{\vk{t}-\vk{s}} \leq \delta} |f(\vk{t})-f(\vk{s})|.
	\]
	
	Recall the definition of $C(E)$ as the Banach space of continuous functions on $E$.	As common in the literature we call a family $\{f_{u,j}(\vk{s}),j \in S_u\}$  of functions from $C(E)$ {tight}, if there exist $u_0>0$ and $M>0$ such that
	$$
	\sup_{u \geq u_0,j \in S_u} |f_{u,j}(\vk{0})| \leq M ,   \quad 
	\limsup_{\cEE{\delta \downarrow 0}}\sup_{u \geq u_0} \sup_{j \in S_u} \omega_{f_{u,j}}(\delta) =0.
	$$ 
	
	Also as in the literature, we call a family of probability measures $\{P_{u,j},j \in S_u\}$ on $C(E)$ tight 
	if there exists $u_0>0$ such that 
	$$
	\lim_{a \to \infty} \sup_{u\geq u_0} \sup_{j\in S_u} P_{u,j}\{f : |f(\vk{0})|\geq a\}=0,  \quad 
	\lim_{\cEE{\delta \downarrow 0}}\sup_{u \geq u_0} \sup_{j\in S_u} P_{u,j}\{f : \omega_f(\delta)\geq \epsilon\}=0
	$$
	for all $\epsilon>0$.

	Remark that if \eqref{con-uni-con} is not fulfilled then there exists a sequence of pairs $(u_1,j_1),(u_2,j_2),...$ and $\epsilon>0$ such that \ksn{$u_k$ form an increasing sequence}, $u_k \to \infty$ as $k\to \infty$ and for all integers $k\ge 1$
	
	\begin{align}\label{con-uni-contr}
	\ABs{ \frac{\pk*{   \cEE{\Gamma}(\xi_{u_k,j_k})>g_{u_k,j_k} } } {\Psi(g_{u_k,j_k})} -\mathcal{H}^{\Gamma, h_{u_k,j_k}}_{\zeta_{u_k,j_k}}} > \epsilon ,\,
	\cEE{\mathcal{H}^{\Gamma, h}_{\zeta}\coloneqq 
		\E{ e^{ 			\Gamma(\sqrt{2}\zeta-\mathrm{Var}(\zeta)-h)}}.}
	\end{align} 
	
	Therefore, it is enough to prove \Cref{the-weak-conv} only for the case when $u \in \{u_1,u_2,u_3,..\}$ and $S_{u_k}=\{j_k\}$. 
	
	Moreover, since $h_{u_k,j_k}$ form a tight family of continuous functions by \Cref{C1}, by passing to a subsequence we can assume that there exists a continuous function $h$ such that $h_{u_k,j_k}(t) \to h(t)$ uniformly in $t$ as $k \to \infty$.
	
	Since $\zeta_{u_k,j_k}$ form a tight
	family by \Cref{C2}-\Cref{C3}, we can also assume \cEE{the weak convergence} $\zeta_{u_k,j_k}(t) \Rightarrow
	\zeta(t)$ as $k \to \infty$ for some centered Gaussian process $\zeta$ with a.s. continuous paths. Therefore, the conditions \Cref{C1}-\Cref{C2} also work for the setting when $\zeta_{u_k,j_k}$ and $h_{u_k,j_k}$ are replaced by $\zeta$ and $h$, but then by \citet*[Lemma 4.1]{debicki2023sojourn} we have
	\begin{align}\label{con-uni-con-easy}
	\lim_{k\to\IF} \ABs{ \frac{\pk*{  \Gamma(\xi_{u_k,j_k})>g_{u_k,j_k} } } {\Psi(g_{u_k,j_k})} -\mathcal{H}^{\Gamma, h}_{\zeta} } = 0. \quad 
	\end{align}
	\ksn{Formally speaking, \citet*[Lemma 4.1]{debicki2023sojourn} is slightly too general for this purpose, however, \citet*[Theorem 2.2]{Uniform2016} has slightly different assumptions (in assumption (D2) from \citet*[Theorem 2.2]{Uniform2016}
		non-standardized Gaussian processs $\xi_{u,\tau_u}$ are considered, while in \Cref{C21} we consider normalized standardized Gaussian processes $\overline{\xi}_{u,j}$), therefore we prefer to apply \citet*[Lemma 4.1]{debicki2023sojourn}. Also remark that our \Cref{f2} is slightly weaker than condition F3 from \citet*[Lemma 4.1]{debicki2023sojourn}, but one can verify that the proof works remains applicable under \Cref{f2} as well.}
	
	It only remains to check \eqref{bddconst} and to prove that
	\begin{align}
	\label{constconv}
	\mathcal{H}^{\Gamma, h_{u_k,j_k}}_{\zeta_{u_k,j_k}}
	-
	\mathcal{H}^{\Gamma, h}_{\zeta}
	\to 0
	\end{align}
	as $k\to\infty$. Remark that 
	\\
	$$
	\sqrt{2}\zeta_{u_k,j_k}-\mathrm{Var}(\zeta_{u_k,j_k})-h_{u_k,j_k} \Rightarrow
	\sqrt{2}\zeta-\mathrm{Var}(\zeta)-h
	$$
	as $k \to \infty$, and therefore, denoting 
	\[
	\mathbb{A}\coloneq\{x: \pk{\Gamma(\sqrt{2}\zeta-\mathrm{Var}(\zeta)-h)>x} \text{ is discontinuous at }x\},
	\] for all $x \in \mathbb{R}\backslash\mathbb{A}$ we obtain		
	\[
	\pk{\Gamma(\sqrt{2}\zeta_{u_k,j_k}-\mathrm{Var}(\zeta_{u_k,j_k})-h_{u_k,j_k})>x}
	\to 
	\pk{\Gamma(\sqrt{2}\zeta-\mathrm{Var}(\zeta)-h)>x}
	\]
	as $k \to \infty$. Since $\mathbb{A}$ is at most countable, and 
	\begin{align*}
	\mathcal{H}^{\Gamma,h_{u_k,j_k}}_{\zeta_{u_k,j_k}}
	=\int_{-\infty}^{\infty} e^x \pk{\Gamma(\sqrt{2}\zeta_{u_k,j_k}-\mathrm{Var}(\zeta_{u_k,j_k})-h_{u_k,j_k})>x},
	\end{align*}
	by the Lebesgue dominated convergence theorem in order to prove
	\eqref{constconv} it remains to prove the existence of an integrable majorant independent of $u \geq u_0$ and $j \in S_u$
	for
	\\ $e^x\pk{\Gamma(\sqrt{2}\zeta_{u,j}-\mathrm{Var}(\zeta_{u,j})-h_{u,j})>x}$ (such a majorant will be in particular a majorant for 
	\\
	$e^x \pk{\Gamma(\sqrt{2}\zeta_{u_k,j_k}-\mathrm{Var}(\zeta_{u_k,j_k})-h_{u_k,j_k})>x}$). Remark that since
	\[
	\mathcal{H}^{\Gamma,h_{u,j}}_{\zeta_{u,j}}
	=\int_{-\infty}^{\infty} e^x \pk{\Gamma(\sqrt{2}\zeta_{u,j}-\mathrm{Var}(\zeta_{u,j})-h_{u,j})>x} dx,
	\] 
	the existence of an integrable majorant for $e^x\pk{\Gamma(\sqrt{2}\zeta_{u,j}-\mathrm{Var}(\zeta_{u,j})-h_{u,j})>x}$ implies \eqref{bddconst} as well.
	
	By \Cref{f2} and the Piterbarg inequality \citet*[Theorem 8.1]{Pit96}, denoting \[
	M_2 \coloneq -\inf_{t \in E,j \in S_u, u \geq u_0}h_{u,j}(t),
	\] and fixing $\epsilon \in (0,1)$, we obtain that there exists $x_0>0$ such that for all $x \geq x_0$, $u \geq u_0$ and $j \in S_u$ 
	\begin{align*}
	&
	\pk{\Gamma(\sqrt{2}\zeta_{u,j}-\mathrm{Var}(\zeta_{u,j})-h_{u,j})>x}
	\\
	&
	\leq
	\pk{ \sup_{t \in E}(\sqrt{2}\zeta_{u,j}(t)-\mathrm{Var}(\zeta_{u,j}(t))-h_{u,j}(t))>x/c_1}
	\\
	&
	\leq
	\pk{ \sup_{t \in E} \sqrt{2}\zeta_{u,j}(t)>x/c_1-M_2}
	\\
	&
	\leq
	\pk{ \sup_{t \in E} \zeta_{u,j}(t)>\frac{x}{2c_1} }
	\leq \Psi\left(\frac{(1-\epsilon)x}{2 c_1 \sigma}  \right),
	\end{align*}
	\ksn{where $\sigma^2$ is defined by \eqref{bddvar}}.
	
	Therefore, $e^x \Psi\left(\frac{(1-\epsilon)x}{2 c_1 \sigma}  \right) \mathbb{I}(x \geq x_0)+e^x \mathbb{I}(x <x_0)$ is an integrable majorant for
	\\
	$e^x\pk{\Gamma(\sqrt{2}\zeta_{u,j}-\mathrm{Var}(\zeta_{u,j})-h_{u,j})>x}$, $u \geq u_0,\,j \in S_u$, finishing the proof.
\end{proof}

\begin{proof}[Proof of Corollary \ref{the-weak-conv-cor}] Denote 
\[
	\widetilde{\xi}_{u,j}(t,s):=\xi_{u,j}(t+s),\,\widetilde{\zeta}_{u,j}(t,s):=\zeta_{u,j}(t+s),\,\widetilde{h}_{u,j}(t,s):=h_{u,j}(t+s).
\]
Since $\xi_{u,j},\zeta_{u,j}$ and $h_{u,j}$ satisfy assumptions C1-C4 with $E=[0,T+L]$, $\widetilde{\xi}_{u,j},\widetilde{\zeta}_{u,j}$ and $\widetilde{h}_{u,j}$ satisfy assumptions C1-C4 as well with $E=[0,T] \times [0,L]$. Therefore, by \Cref{the-weak-conv} with 
\[
	\Gamma(f)=\sup_{t \in [0,T]} \inf_{s \in [0,L]} f(t,s)
\]
it holds that
\begin{align}
\label{conv-bivar}
\lim_{u\to\IF}\sup_{j\in S_u} &\Bigg| \frac{\pk*{ \sup_{t \in [0,T]}\inf_{s \in [0,L]}\widetilde{\xi}_{u,j}(t,s)>g_{u,j} } }{\Psi(g_{u,j})} \notag
\\
&\quad-\E{ e^{ 			\sup_{t \in [0,T]}\inf_{s \in [0,L]}(\sqrt{2}\widetilde{\zeta}_{u,j}(t,s)-\mathrm{Var}(\widetilde{\zeta}_{u,j}(t,s))-\widetilde{h}_{u,j}(t,s))}}\Bigg|=0.  
\end{align}
In order to finish the proof, remark that \eqref{conv-bivar} is equivalent to \eqref{con-uni-con-cor}.
\end{proof}

\cEE{For the following proofs recall the definition \eqref{betahat} of $\hat{\gamma}$, the definition \eqref{Zhat} of $\widehat X$ and $\widehat Y$ and the definition \eqref{lut} of $\widehat{L}_{u,t}$. Next, we obtain} 
\begin{align}
\label{borell_prelim}
&\pk*{\underset{t \in [0,T]}{\sup}\; \underset{s \in [0,L_u]}{\inf} \left(X(t+s)-d(t+s)^{\gamma}\right) > u }\notag\\
&=
\pk*{\underset{t \in [0,T]}{\sup}\; \underset{s \in [0,L_u]}{\inf}
	\left( \widehat{X}((t+s)^{\alpha/\kappa})-d(t+s)^{\gamma}
	\right) > u }\notag\\
&
=\pk*{\underset{t \in [0,T]}{\sup}\; \underset{v \in [t^{\alpha/\kappa},(t+L_u)^{\alpha/\kappa}]}{\inf}
	\left( \widehat{X}(v)-dv^{\hat{\gamma}}
	\right) > u }
\notag
\\
&=
\pk*{\underset{t \in [0,T]}{\sup}\; \underset{s \in [0,1]}{\inf}
	\left( \widehat{X}(t^{\alpha/\kappa}+s((t+L_u)^{\alpha/\kappa}-t^{\alpha/\kappa}))-d(t^{\alpha/\kappa}+s((t+L_u)^{\alpha/\kappa}-t^{\alpha/\kappa}))^{\hat{\gamma}} 
	\right)
	> u }\notag\\
&=
\pk*{\underset{t \in [0,T^{\alpha/\kappa}]}{\sup}\; \underset{s \in [0,1]}{\inf}
	\left( \widehat{X}(t+\widehat{L}_{u,t}s)-d(t+\widehat{L}_{u,t}s)^{\hat{\gamma}}
	\right)
	> u }.
\end{align}
However, for arbitrary $\delta>0$ for sufficiently large $u$ by Borell-TIS inequality (see, e.g., \citet*{Pit96})
\begin{align}
&\pk*{\underset{t \in [\delta,T^{\alpha/\kappa}]}{\sup}\; \underset{s \in [0,1]}{\inf}
	\left( \widehat{X}(t+\widehat{L}_{u,t}s)-d(t+\widehat{L}_{u,t}s)^{\hat{\gamma}}
	\right)
	> u } \notag
\\
&\leq
\pk*{\underset{t \in [\delta,T^{\alpha/\kappa}]}{\sup}\; \underset{s \in [0,1]}{\inf}
	 \widehat{X}(t+\widehat{L}_{u,t}s)
	> u } \notag
\\	
&\leq
\pk*{\underset{t \in [\delta,T^{\alpha/\kappa}]}{\sup}\; \widehat{X}(t)
	> u } 
	=
	\pk{\sup_{t \in [\delta^{\kappa/\alpha},T]} \widehat{X}(t) > u}
	\leq 
	 e^{-\frac{(u-m)^2}{\sigma_\delta^2}}, 
\end{align}
where $\sigma_\delta^2 = \sup_{t \in [\delta^{\kappa/\alpha},T]} \mathrm{Var}(X(t)) < 1$ and $m = \E{\sup_{t \in [0,T]}X(t)}$.

Therefore, by \eqref{borell_prelim} as $u \to \infty$  

\begin{align}
	\label{borell}
	&
	\pk*{\underset{t \in [0,T]}{\sup}\; \underset{s \in [0,L_u]}{\inf} \left(X(t+s)-d(t+s)^{\gamma}\right) > u } \notag
	\\
	&
	=
	\pk*{\underset{t \in [0,\delta]}{\sup}\; \underset{s \in [0,1]}{\inf}
		\left( \widehat{X}(t+\widehat{L}_{u,t}s)-d(t+\widehat{L}_{u,t}s)^{\hat{\gamma}}
		\right)
		> u }+o(\Psi(u)).
\end{align}

\cEE{By \Cref{LS2}, as shown in \citet*[Corollary
	2.1]{tabis}, it follows that both constants (recall that $V_{\widehat Y}$ is the variogram of $\widehat Y$)}

\begin{align}
\label{varbounds}
c_1 = \underset{x \in [0,1]}{\inf}\frac{V_{\widehat{Y}}(1,x)}{|1-x|^{\kappa}},\quad c_2 = \underset{x \in [0,1]}{\sup}\frac{V_{\widehat{Y}}(1,x)}{|1-x|^{\kappa}}
\end{align}
are positive and finite. 
Choose  $\delta>0$ sufficiently small such that  for all $t,s \in [0,4\delta]$ 
$$1-r_{\widehat{X}}(t,s)\in \left[\frac{a}{2} V_{\widehat{Y}}(t,s),2a V_{\widehat{Y}}(t,s)\right]$$
and $4\delta\leq (2ac_2)^{1/\kappa}$.

However, by \eqref{varbounds} if $t \geq s$ (and analogously for $t<s$), then
\begin{align}
&
V_{\widehat{Y}}(t,s)
=t^{\kappa}V_{\widehat{Y}}(1,s/t) \leq c_2 t^\kappa |1-s/t|^\kappa = c_2 |t-s|^\kappa, \notag
\\
&V_{\widehat{Y}}(t,s)
=t^{\kappa}V_{\widehat{Y}}(1,s/t) \geq c_1 t^\kappa |1-s/t|^\kappa = c_1 |t-s|^\kappa.
\end{align}

Consequently, we obtain 
\begin{align}
\label{bddcorr}
1-r_{\widehat{X}}(t,s) \in 
\left[\frac{ac_1}{2} |t-s|^{\kappa},2ac_2 |t-s|^{\kappa}\right] \text{ for all } t,s \in [0,4\delta].
\end{align}

Moreover, by \Cref{LS1} and \Cref{LS2}
\begin{align}
\label{localBM}
V_{\widehat{Y}}(t,s)
=t^{\kappa}V_{\widehat{Y}}(1,s/t)
\sim c_{\widehat{Y}}t^{\kappa}|1-s/t|^{\kappa}
=c_{\widehat{Y}}|t-s|^{\kappa}
\end{align}
uniformly in $t>s>0$ as $t/s \to 1$.

\begin{proof}[Proof of Lemma \ref{kappacut}]
	Denote for convenience $Z(t)\coloneq \overline{X}(t^{\kappa/\alpha})$. Recall that by \eqref{lut}
	$$t+\widehat{L}_{u,t}s = t+s((t^{\kappa/\alpha}+L_u)^{\alpha/\kappa}-t) \geq t + (\alpha/\kappa)L_u t^{1-\kappa/\alpha} s.$$ 
	\def\wMM{\cEE{\widetilde{M}_{u,k}}}
	Consequently,
	\begin{align}
	\label{chain_prelim}
	&\pk*{\underset{t \in [\cEE{M} u^{-2/\kappa+\frac{\epsilon\alpha}{\alpha-\kappa}},\delta]}{\sup} \underset{s \in [0,1]}{\inf} \overline{{X}}((t+\widehat{L}_{u,t}s)^{\cEE{\kappa/\alpha}})>u}
	\notag
	\\
	&
	=\pk*{\underset{t \in [\cEE{M} u^{-2/\kappa+\frac{\epsilon\alpha}{\alpha-\kappa}},\delta]}{\sup} \underset{s \in [0,1]}{\inf} Z(t+\widehat{L}_{u,t}s)>u} \notag
	\\
	& \leq \pk*{\underset{t \in [Mu^{-2/\kappa+\frac{\epsilon\alpha}{\alpha-\kappa}},\delta]}{\sup}\underset{s \in [0,1]}{\inf}Z\left(t+\left(\alpha/\kappa\right)L_u t^{1-\kappa/\alpha} s\right)>u}  
	\end{align}
	
	However, fixing $M$ large enough and setting $\wMM\coloneq \frac{[Mu^{\frac{\epsilon \alpha}{\alpha-\kappa}} ]+k}{u^{\frac{\epsilon\alpha}{\alpha-\kappa}}},\Delta=u^{-2/\kappa}$ for
	$u$ large enough for some $C>0$ independent of $u$ and $M$ we have 
	\begin{align}
	\label{chain}
	& \pk*{\underset{t \in [Mu^{-2/\kappa+\frac{\epsilon\alpha}{\alpha-\kappa}},\delta]}{\sup}\underset{s \in [0,1]}{\inf}Z\left(t+\left(\alpha/\kappa\right)L_u t^{1-\kappa/\alpha} s\right)>u}
	\notag  
	\\& \leq
	\sum\limits_{k=[Mu^{\frac{\epsilon\alpha}{\alpha-\kappa}}]}^{[\delta/\Delta]+1}\pk*{\underset{t \in [k\Delta,\left(k+1\right)\Delta]}{\sup}\underset{s \in [0,1]}{\inf} Z\left(t+ \left(\alpha/\kappa\right)L_u t^{1-\kappa/\alpha} s\right)>u} 
	\notag 
	\\&\leq
	\sum\limits_{k=[Mu^{\frac{\epsilon\alpha}{\alpha-\kappa}}]}^{[\delta/\Delta]+1} \pk*{ Z\left(\left(k+1\right)\Delta\right)>u,\;Z\left(k\Delta+ \left(\alpha/\kappa\right)L u^{-2/\kappa} \left(\frac{k}{u^{\frac{\epsilon\alpha}{\alpha-\kappa}}}\right)^{\frac{\alpha-\kappa}{\alpha}} \right) > u } 
	\notag
	\\&\leq
	\sum\limits_{k=0}^{\infty}C \Psi\left(u\right)
	\exp\left(-\frac{\left|-1+ \left(\alpha/\kappa\right)L  \wMM^{\frac{\alpha-\kappa}{\alpha}}\right|^{\kappa}}{2C}\right)
	\notag
	\\&\leq
	C\Psi\left(u\right)u^{\frac{\epsilon\alpha}{\alpha-\kappa}}
	\sum\limits_{k=0}^{\infty}	
	\exp\left(-(M+(k+1)u^{-\frac{\epsilon \alpha}{\alpha- \kappa} })^{\frac{\kappa(\alpha-\kappa)}{2 \alpha}} \right) \cdot\left((k+1)u^{-\frac{\epsilon\alpha}{\alpha-\kappa}}-ku^{-\frac{\epsilon\alpha}{\alpha-\kappa}}\right)
	\notag
	\\& \leq 
	C \Psi\left(u\right)u^{\frac{\epsilon\alpha}{\alpha-\kappa}}
	\int\limits_{0}^{\infty}\exp\left(
	-\left(M+x\right)^{\frac{\kappa\left(\alpha-\kappa\right)}{2\alpha}}
	\right) dx.
	\end{align}
	The second inequality of \eqref{chain} follows from the fact that if $M$ is large enough then for $u$ large enough and $t \in [k \Delta,(k+1)\Delta]$ it holds that 
	\[
	(k+1)\Delta, k\Delta+ \left(\alpha/\kappa\right)L u^{-2/\kappa} \left(\frac{k}{u^{\frac{\epsilon\alpha}{\alpha-\kappa}}}\right)^{\frac{\alpha-\kappa}{\alpha}} \in [t,t+(\alpha/\kappa)L_u t^{1-\kappa/\alpha}].
	\]
	Indeed,
	\begin{align*}
	&
	t+(\alpha/\kappa)L_u t^{1-\kappa/\alpha}\geq
	k\Delta+(\alpha/\kappa)L_u (k \Delta)^{1-\kappa/\alpha}
	\\
	&
	=
	k\Delta+ \left(\alpha/\kappa\right)L u^{-2/\kappa} \left(\frac{k}{u^{\frac{\epsilon\alpha}{\alpha-\kappa}}}\right)^{\frac{\alpha-\kappa}{\alpha}}
	\\
	&
	\geq 
	k\Delta+ \left(\alpha/\kappa\right)L u^{-2/\kappa} \left(\frac{M}{2}\right)^{\frac{\alpha-\kappa}{\alpha}}
	\geq (k+1)\Delta
	\geq t.
	\end{align*}
	In order to check the third inequality of \eqref{chain}, remark that under the conditions of \Cref{Lem25}
	\begin{align*}
		&
		\pk*{ 
			\begin{aligned}
				&
				Z((A+T) u^{-2/\kappa}) > u ,
				\\&
				Z(t_0 u^{-2/\kappa}) > u 
			\end{aligned}
		}
		\\
		&
		\leq
		\pk*{ 
			\begin{aligned}
				&
				\sup_{t \in [A,A+T]u^{-2/\kappa}} Z(t) > u ,
				\\&
				\sup_{t \in [t_0,t_0+T]u^{-2/\kappa}} Z(t) > u 
			\end{aligned}
		}
		=
		\pk*{ 
			\begin{aligned}
				&
				\sup_{t \in [A,A+T]u^{-2/\kappa}} \overline{X}(t^{\kappa/\alpha}) > u ,
				\\&
				\sup_{t \in [t_0,t_0+T]u^{-2/\kappa}} \overline{X}(t^{\kappa/\alpha}) > u 
			\end{aligned}
		}.
	\end{align*}
	Therefore, the third inequality of \eqref{chain} follows from \Cref{Lem25} with
	\\ $A=k,T=1,t_0=k+(\alpha/\kappa)L\left(\frac{k}{u^{\frac{\epsilon\alpha}{\alpha-\kappa}}}\right)^{\frac{\alpha-\kappa}{\alpha}}$, because for $k \geq [Mu^{\frac{\epsilon \alpha}{\alpha-\kappa} }]$ it holds that
	\[
	k+ \left(\alpha/\kappa\right)L
	\left(\frac{k}{u^{\frac{\epsilon\alpha}{\alpha-\kappa}}}\right)^{\frac{\alpha-\kappa}{\alpha}}-(k+1)
	=
	-1+ \left(\alpha/\kappa\right)L  \widetilde{M}_{u,k-[Mu^{\frac{\epsilon \alpha}{\alpha-\kappa} }]}^{\frac{\alpha-\kappa}{\alpha}}.
	\]
	The fourth inequality of \eqref{chain} follows from the fact that if $M$ is large enough, then for $u$ large enough for all integers $k \geq 0$ it holds that
	\[
	\frac{\left|-1+ \left(\alpha/\kappa\right)L  \widetilde{M}_{u,k}^{\frac{\alpha-\kappa}{\alpha} }\right|^{\kappa}}{2C}
	\geq 
	(M+(k+1)u^{-\frac{\epsilon \alpha}{\alpha-\kappa}} )^{\frac{\kappa(\alpha-\kappa)}{2\alpha}}. 
	\]
	
	Therefore, we obtain
	\begin{align*}
	&\underset{u \to \infty}{\limsup}\;\frac{1}{\Psi\left(u\right)u^{\frac{\epsilon\alpha}{\alpha-\kappa}}}\pk*{\underset{t \in [Mu^{-2/\kappa+\frac{\epsilon\alpha}{\alpha-\kappa}},\delta]}{\sup} \underset{s \in [0,1]}{\inf} Z\left(t+\widehat{L}_{u,t}s\right)-u>0} \\
	& \leq  C
	\int\limits_{0}^{\infty}\exp\left(
	-\left(M+x\right)^{\frac{\kappa\left(\alpha-\kappa\right)}{2\alpha}}
	\right) dx \to 0
	\end{align*}
	as $M \to \infty$ establishing the proof.
\end{proof}

\begin{proof}[Proof of Theorem \ref{last}]:
	Let $\widetilde{X}(t),\;t \in [0,T]$ be a centered Gaussian process with covariance function
	\\
	$R_{\widetilde{X}}(t,s)=\exp\left(-V_{\widehat{Y}}(t,s)\right)$. Define $X_u(t)\coloneq\widetilde{X}(tu^{-2/\kappa})$.	
	
	By Corollary \ref{the-weak-conv-cor} with $S_u=\{1\}$, $g_{u,1}=u$,
	$\xi_{u,1}(t)=X_u(t)$ and $\zeta_{u,1}(t)=\widehat{Y}(t)$ for each $T>0$ we have
	\begin{align} 
	\label{defut_prelim}
	\lim_{u \to \infty}
	\left|\frac{1}{\Psi(u)}
	\pk*{\sup_{t \in [0,T]}\inf_{s\in [0,L]} X_u(t+s)
		>u
	}-\mathcal{H}^{\mathrm{par}}_{\widehat{Y},T,L}\right| =0.
	\end{align}
	
	Therefore, we can find $u(T)>0$ such that
	\begin{align} 
	\label{defut}
	\left|\frac{1}{\Psi(u(T))}
	\pk*{\sup_{t \in [0,T]}\inf_{s\in [0,L]} X_{u(T)}(t+s)
		>u(T)
	}-\mathcal{H}^{\mathrm{par}}_{\widehat{Y},T,L}\right| \leq 1.
	\end{align}
	Moreover, without loss of generality we can assume that $u(T)$ is so large that $u(T)^{-2/\kappa}T \to 0$ as $T \to \infty$.
	
	Now fix $S \in \mathbb{Z}_{>0}$, let $n \in \mathbb{Z}_{>0}$ and denote
	\begin{align*}
	\Sigma\Sigma_{n,S}\coloneq
	\sum\limits_{k=n}^{n^2-1}
	\sum\limits_{l=k+1}^{n^2-1}
	\pk*{\underset{[kS,(k+1)S]}{\sup} \;X_{u(n^2 S)}(t)>u(n^2 S),\;
		\underset{[lS,(l+1)S]}{\sup} \;X_{u(n^2 S)}(t)>u(n^2 S).
	}
	\end{align*}
	With this notation we have 
	\begin{align}
	\label{ubconst}
	&\pk*{\sup_{t \in [0,n^2 S]}\inf_{s\in [0,L]} X_{u(n^2 S)}(t+s)
		>u(n^2 S)
	} \notag
	\\ &\leq 
	\pk*{\sup_{t \in [0,n S]}\inf_{s\in [0,L]} X_{u(n^2 S)}(t+s)
		>u(n^2 S)
	}+
	\pk*{\sup_{t \in [nS,n^2 S]}\inf_{s\in [0,L]} X_{u(n^2 S)}(t+s)
		>u(n^2 S)
	} \notag
	\\&
	\leq \pk*{\sup_{t \in [0,n S]}\inf_{s\in [0,L]} X_{u(n^2 S)}(t+s)
		>u(n^2 S)
	}+
	\sum\limits_{k=n}^{n^2-1}\pk*{\sup_{t \in [kS,(k+1) S]}\inf_{s\in [0,L]} X_{u(n^2 S)}(t+s)
		>u(n^2 S)
	}.
	\end{align}
	Moreover,
	\begin{align}
	\label{lbconst}
	\pk*{\sup_{t \in [0,n^2 S]}\inf_{s\in [0,L]} X_{u(n^2 S)}(t+s)
		>u(n^2 S)
	}
	&\geq \pk*{\sup_{t \in [nS,n^2 S]}\inf_{s\in [0,L]} X_{u(n^2 S)}(t+s)
		>u(n^2 S)
	}
	\notag
	\\
	&\geq \sum\limits_{k=n}^{n^2-1}\pk*{\sup_{t \in [kS,(k+1) S]}\inf_{s\in [0,L]} X_{u(n^2 S)}(t+s)
		>u(n^2 S) \notag
	}
	\\
	&\quad
	-\Sigma\Sigma_{n,S}.
	\end{align}
	Let 
	$ \underset{n \to \infty}{\limsup} \frac{\Sigma\Sigma_{n,S}}{n^2 S \Psi(u(n^2 S))} \eqcolon f(S).
	$ Then by \Cref{Lem25} and
	\Cref{Lem27} we have 
	$f(S) \to 0$ as $S \to \infty$.
	
	Moreover, for $t,s \geq 0$ for $c_2$ defined in \eqref{varbounds}
	\begin{align*}
	\mathrm{Cov}(X_u(t),X_u(s))
	=\exp(-V_{\widehat{Y}}(t u^{-2/\kappa},s u^{-2/\kappa}))
	=
	\exp(-u^{-2} V_{\widehat{Y}}(t ,s ))
	\geq 
	\exp(-u^{-2} c_2 V_{B_\kappa}(t ,s )),
	\end{align*}
	hence, for a fixed $S>0$, denoting by $\overline{X}_2(t)$ a stationary centered Gaussian process with covariance function $R_{\overline{X}_2}(t,s)=\exp(-|t-s|^\kappa)$,
	using Slepian's inequality for the first inequality and \Cref{con-uni-con} for the last asymptotics, we obtain 
	\begin{align*}
	\pk*{\underset{t \in [0,nS]}{\sup} \;X_{u(n^2 S)}(t)>u(n^2 S)}
	&\leq&
	\pk{\sup_{t \in [0,nS]}\overline{X}_2(t u(n^2 S)^{-2/\kappa})>u(n^2 S)}
	\notag
	\\
	&\leq& 
	n \pk{\sup_{t \in [0,S]}\overline{X}_2(t u(n^2 S)^{-2/\kappa})>u(n^2 S)}.
	\end{align*}
	Consequently, invoking \Cref{con-uni-con} yields
	\begin{align}
	\label{startbound}
	\pk*{\underset{t \in [0,nS]}{\sup} \;X_{u(n^2 S)}(t)>u(n^2 S)}=O(n \Psi(u(n^2 S)))
	\end{align}
	as $n \to \infty$.
	
	Therefore, fixing $S$ and letting $n \to \infty$ in (\ref{ubconst}) and (\ref{lbconst}), divided by $n^2 S \Psi(u(n^2 S))$, using (\ref{defut}) for the left-hand side and
	using further \eqref{startbound} and \Cref{the-weak-conv-cor} for the right-hand sides we obtain
	\begin{align*}
	\frac{
		\mathcal{H}^{\mathrm{par}}_{B_{\kappa}(c_{\widehat{Y}}^{1/\kappa}\cdot),S,L}
	}
	{S}-f(S)
	\leq \underset{n\to \IF}{\liminf}
	\frac{
		\mathcal{H}^{\mathrm{par}}_{\widehat{Y},n^2 S,L}
	}
	{n^2 S}
	\leq
	\underset{n\to \IF}{\limsup}\frac{
		\mathcal{H}^{\mathrm{par}}_{\widehat{Y},n^2 S,L}
	}{n^2 S}
	\leq 
	\frac{\mathcal{H}^{\mathrm{par}}_{B_{\kappa}(c_{\widehat{Y}}^{1/\kappa}\cdot),S,L}
	}
	{S},
	\end{align*}
	which implies
	\begin{align*}
	c_{\widehat{Y}}^{1/\kappa}
	\frac{
		\mathcal{H}^{\mathrm{par}}_{\kappa,c_{\widehat{Y}}^{1/\kappa}S,c_{\widehat{Y}}^{1/\kappa}L}
	}
	{c_{\widehat{Y}}^{1/\kappa} S}-f(S)
	\leq \underset{n\to \IF}{\liminf}
	\frac{
		\mathcal{H}^{\mathrm{par}}_{\widehat{Y},T,L}
	}
	{T}
	\leq
	\underset{n\to \IF}{\limsup}\frac{
		\mathcal{H}^{\mathrm{par}}_{\widehat{Y},T,L}
	}{T}
	\leq 
	c_{\widehat{Y}}^{1/\kappa}
	\frac{
		\mathcal{H}^{\mathrm{par}}_{\kappa,c_{\widehat{Y}}^{1/\kappa}S,c_{\widehat{Y}}^{1/\kappa}L}
	}
	{c_{\widehat{Y}}^{1/\kappa} S}.
	\end{align*}
	Letting $S \to \infty$, we obtain 
	$\mathcal{H}^{\mathrm{par}}_{\widehat{Y},L}= c_{\widehat{Y}}^{1/\kappa}\mathcal{H}^{\mathrm{par}}_{\kappa,c_{\widehat{Y}}^{1/\kappa}L}$.
\end{proof}
\begin{proof}[Proof of Theorem \ref{mainth}]
	Consider two cases depending on $p=p(\alpha,\beta,\gamma,\kappa)$:
	
	\textbf{\underline{Case $p>0$.}} Fix $M'>0$ and $n>0$ and denote \\
	$$E_u \coloneq \left[\frac{1}{M'}u^{-2/\kappa+p},M' u^{-2/\kappa+p}\right],\,t_{u,n,k}\coloneq\frac{1}{M'}u^{-2/\kappa+p}+kn(ac_{\widehat{Y}})^{-1/\kappa}u^{-2/\kappa},$$
	$$ I_k(u,n)\coloneq[t_{u,n,k},\,t_{u,n,k+1}],$$
	where 
	$$0 \leq k \leq N(u,n),\quad   N(u,n)\coloneq\left[\frac{(M'-\frac{1}{M'})u^{p}
		(ac_{\widehat{Y}})^{1/\kappa}}{n}\right]-1,$$
	$$E(u,n) \coloneq \bigcup_{0 \leq k \leq N(u,n)}I_k(u,n), \quad K(u,n)\coloneq\{k \in \mathbb{Z}: 0 \leq k \leq N(u,n)\}.$$
	
	Denote
	\begin{align*}
	&
	\pi_u:=
	\pk*{\sup_{t \in [0,\delta]}
		\inf_{s \in [0,1]}	
		\left(
		\widehat{X}(t+\widehat{L}_{u,t}s)
		-d(t+\widehat{L}_{u,t}s)^{\hat{\gamma}}
		\right)
		>u
	},
	\\
	&
	\pi_{u,M'}:=
	\pk*{\sup_{t \in E_u}
		\inf_{s \in [0,1]}	
		\left(
		\widehat{X}(t+\widehat{L}_{u,t}s)
		-d(t+\widehat{L}_{u,t}s)^{\hat{\gamma}}
		\right)
		>u
	},
	\\
	&
	\Sigma_{u,n,M'}:=
	\sum_{k \in K(u,n)}
	\pk*{\sup_{t \in I_k(u,n)}
		\inf_{s \in [0,1]}	
		\left(
		\frac{
			\widehat{X}(t+\widehat{L}_{u,t}s)
		}{
			1+\frac{d(t+\widehat{L}_{u,t}s)^{\hat{\gamma}}}{u}
		}
		\right)
		>u
	},\\
	&
	\Sigma\Sigma_{u,n,M'}:=
	\sum_{ i \neq j, i,j\in K(u,n)} \pk*{ \sup_{t\in I_{i}(u,n)} \frac{\widehat{X}(t)}{1+\frac{dt^{\hat{\gamma}}}{u}}>u,
		\sup_{t\in I_j(u,n)} \frac{\widehat{X}(t)}{1+\frac{dt^{\hat{\gamma}}}{u}}>u}.
	\end{align*}
	Our plan for the first next steps consists in finding the asymptotics of the single sums $\Sigma_{u,n,M'}$ as $u \to \infty$, passing to limit in this asymptotics as $n \to \infty$ and checking that 
	\begin{align}
	\label{step1}
	\lim_{n \to \infty} \limsup_{u \to \infty}\frac{\left|\Sigma_{u,n,M'}- \pi_{u,M'} \right|}{\Psi(u)u^p} =0.
	\end{align}
	Next we will check
	\begin{align}
	\label{step2}
	\lim_{M' \to \infty} \limsup_{u \to \infty}\frac{\left|\pi_u- \pi_{u,M'} \right|}{\Psi(u)u^p} =0,
	\end{align}
	which is enough to finish the proof by \eqref{borell}.
	
	\underline{\emph{The negligibility of $E_u \backslash E(u,n)$}}
	Analogously to the proof of condition A1 from \citet*[Theorem 2.2]{novikov2025sojourn} we obtain
	\begin{align}
	\label{condA1}
	\Delta_{u,n,M'}:=\pk*{\underset{t \in [t_{u,n,N(u,n)},M'u^{-2/\kappa+p}]}{\sup}
		\frac{\widehat{X}(t)}{1+\frac{dt^{\hat{\gamma}}}{u}}
		>u}=O(\Psi(u))=o(\Psi(u)u^p)
	\end{align} 
	as $u \to \infty$.

	\underline{\emph{The negligibility of the double sums $\Sigma\Sigma_{u,n,M'}$}} By the same arguments as in the proof of condition A3 from
	\citet*[Theorem 2.2]{novikov2025sojourn} we obtain 
	\begin{align}
	\label{dsnegl}
	\lim_{n\rw\IF}\limsup_{u\rw\IF} \frac
	{\Sigma\Sigma_{u,n,M'}}{\Psi(u)u^p}= 0.
	\end{align}
	\underline{\emph{The asymptotics of the single sums $\Sigma_{u,n,M'}$}}
	
	Denote $t_k(\tau)\coloneq t_{u,n,k}+\tau n (a c_{\widehat{Y}}^{-1/\kappa}u^{-2/\kappa})$ and recall that (see \eqref{lup}) in the setting of \Cref{mainth} we have
	\[
	L_u=Lu^{-\frac{2+p(\alpha-\kappa)}{\alpha}}
	\]
	for the case $p>0$ and the abbreviation (see \eqref{lut}) 
	\[
	\widehat{L}_{u,t}=(t^{\kappa/\alpha}+L_u)^{\alpha/\kappa}-t.
	\]
	Remark that uniformly in $u>0$ and $t>0$ such that $\frac{L_u}{t^{\kappa/\alpha}} \to 0$ we have
	\\ 
	\begin{align}
	\label{Lht}
	\widehat{L}_{u,t}\sim \frac{\alpha}{\kappa}
	L_u t^{1-\kappa/\alpha}.
	\end{align}
	
	Therefore, setting (recall the notation $Z(t)=\overline{X}(t^{\kappa/\alpha})$)
	$$Z_{u,n,k}(\tau,s)\coloneq Z(t_k(\tau)+\widehat{L}_{u,t_k(\tau)}s), \quad \tau_{u,n,k}\coloneq t_{u,n,k}u^{2/\kappa-p}$$ 
	uniformly in $\tau_1,\tau_2, s_1,s_2 \in [0,1]$ we have
	\begin{align*}
	\widehat{L}_{u,t_k(\tau_1)}
	\sim\frac{\alpha}{\kappa}
	L_u t_k(\tau_1)^{1-\kappa/\alpha}
	\sim\frac{\alpha}{\kappa}
	Lu^{-\frac{2+p(\alpha-\kappa)}{\alpha}} t_{u,n,k}^{1-\kappa/\alpha}
	=\frac{\alpha}{\kappa}
	Lu^{-2/\kappa}\tau_{u,n,k}^{1-\kappa/\alpha}
	\end{align*}
	and (here we use \eqref{localBM})
	\begin{align}
	\label{calcvar}
	&u^2 \mathrm{Var}(Z_{u,n,k}(\tau_1,s_1)-Z_{u,n,k}(\tau_2,s_2))\notag
	\\&
	\sim u^2 aV_{\widehat{Y}}(t_k(\tau_1)+\widehat{L}_{u,t_k(\tau_1)}s_1,t_k(\tau_2)+\widehat{L}_{u,t_k(\tau_2)}s_2)\notag
	\\ &
	=ac_{\widehat{Y}} \left| 
	(\tau_1-\tau_2)n(ac_{\widehat{Y}})^{-1/\kappa}+(s_1-s_2)\frac{\alpha}{\kappa}L  \tau_{u,n,k}^{1-\kappa/\alpha}
	\right|^{\kappa}+o(1) \notag
	\\ &
	= \left| 
	(\tau_1-\tau_2)n+(s_1-s_2)(ac_{\widehat{Y}})^{1/\kappa}\frac{\alpha}{\kappa}L  \tau_{u,n,k}^{1-\kappa/\alpha}
	\right|^{\kappa}+o(1). 
	\end{align}
	
	Denote by
	$\widetilde{B}_{\kappa,v,w}(t,s),\,t,s \geq 0$ the centered continuous Gaussian process which satisfies $\widetilde{B}_{\kappa,v,w}(0,0)=0$ and
	\[
	\mathrm{Var}
	(\widetilde{B}_{\kappa,v,w}(t_1,s_1)-\widetilde{B}_{\kappa,v,w}(t_2,s_2))=|v(t_1-t_2)+w(s_1-s_2)|^{\kappa}.
	\]
	Then
	\begin{align}
	\label{parrescale}
	\mathcal{H}_{\widetilde{B}_{\kappa,v,w},1,1}^0
	=
	\mathcal{H}_{\widetilde{B}_{\kappa,1,1},v,w}^0
	=\mathcal{H}^{\mathrm{par}}_{\kappa,v,w}.
	\end{align}
	
	Under the notation of \Cref{mainth} denote $\sigma_{\widehat{X}}(t) \coloneq \mathrm{Var}(\widehat{X}(t))$. 
	
	If $n$ is large enough, then there exist 
	$$t_*,t_{**}\in [t_{u,n,k},t_{u,n,k+1}+\max\{\widehat{L}_{u,t_{u,n,k}},\widehat{L}_{u,t_{u,n,k+1}}\}]$$
	such that by \Cref{the-weak-conv-cor} applied to the Gaussian processes $Z_{u,n,k}(\tau,s)$ with $\tau,s\in[0,1]$ we obtain (\Cref{C0}, \Cref{C1} hold trivially, \Cref{C2} follows from \eqref{calcvar}, \Cref{C3} follows from (\ref{bddcorr}))
	\begin{align*}
	&\pk*{\sup_{t \in I_k(u,n)}
		\inf_{s \in [0,1]}	
		\left(
		\widehat{X}(t+\widehat{L}_{u,t}s)
		-d(t+\widehat{L}_{u,t}s)^{\hat{\gamma}}
		\right)
		>u
	}
	\\&
	=
	\pk*{
		\sup_{\tau \in [0,1]}\inf_{s \in [0,1]}
		\frac{
			\widehat{X}(t_k(\tau)+\widehat{L}_{u,t_k(\tau)}s)
		}{
			1+\frac{d(t_k(\tau)+\widehat{L}_{u,t_k(\tau)}s)^{\hat{\gamma}}}{u}
		}
		>u
	}
	\\&=
	\pk*{
		\sup_{\tau \in [0,1]} \inf_{s\in[0,1]}
		\left( Z(t_k(\tau)+\widehat{L}_{u,t_k(\tau)}s)
		\right)
		>u\left(1+\frac{dt_*^{\hat{\gamma}}}{u}\right)\sigma_{\widehat{X}}(t_{**})
	}
	\\&\sim\Psi\left(u\left(1+
	\frac{dt_{*}^{\hat{\gamma}}}{u}\right)
	\sigma_{\widehat{X}}(t_{**})\right)
	\mathcal{H}^{0}_{\widetilde{B}_{\kappa,n,(ac_{\widehat{Y}})^{1/\kappa}
			\frac{\alpha}{\kappa}L  \tau_{u,n,k}^{1-\kappa/\alpha}},1,1
	}
	\\&
	\sim\Psi(u) \exp \left(-d u^{1+(p-2 / \kappa) \hat{\gamma}} \tau_{u, n, k}^{\hat{\gamma}}-b u^{2+(p-2 / \kappa) \hat{\beta}} \tau_{u, n, k}^{\hat{\beta}}\right)
	\mathcal{H}^{\mathrm{par}}_{\kappa,n,(ac_{\widehat{Y}})^{1/\kappa}
		\frac{\alpha}{\kappa}L  \tau_{u,n,k}^{1-\kappa/\alpha}}
	\end{align*}
	uniformly in $k$ (in the final equivalence we use  \eqref{parrescale}). Consequently, we obtain 
	\begin{align*}
	&\Sigma_{u,n,M'}
	\\
	&\sim\sum_{k \in  K(u,n)} \Psi\left(u\right)
	\mathcal{H}^{\mathrm{par}}_{\kappa,n,(ac_{\widehat{Y}})^{1/\kappa}\frac{\alpha}{\kappa}L  \tau_{u,n,k}^{1-\kappa/\alpha}}\exp\left(-du^{1+(p-2/\kappa)\hat{\gamma}} \tau_{u,n,k}^{\hat{\gamma}}
	-bu^{2+(p-2/\kappa)\hat{\beta}} \tau_{u,n,k}^{\hat{\beta}}\right)
	\\
	&\sim
	\frac{u^p}{n(ac_{\widehat{Y}})^{-1/\kappa}}
	\Psi(u) 
	\\
	&\quad\times\sum_{k \in  K(u,n)} \mathcal{H}^{\mathrm{par}}_{\kappa,n,(ac_{\widehat{Y}})^{1/\kappa}\frac{\alpha}{\kappa}L  \tau_{u,n,k}^{1-\kappa/\alpha}} \exp\left(-du^{1+(p-2/\kappa)\hat{\gamma}} \tau_{u,n,k}^{\hat{\gamma}}
	-bu^{2+(p-2/\kappa)\hat{\beta}} \tau_{u,n,k}^{\hat{\beta}}\right)(\tau_{u,n,k+1}-\tau_{u,n,k})
	\\
	&\sim
	\frac{u^p}{n(ac_{\widehat{Y}})^{-1/\kappa}}
	\Psi(u) 
	\\
	&\quad\times\int\limits_{\frac{1}{M'}}^{M'} 
	\exp\left(-du^{1+(p-2/\kappa)\hat{\gamma}} z^{\hat{\gamma}}
	-bu^{2+(p-2/\kappa)\hat{\beta}} z^{\hat{\beta}}\right)\mathcal{H}^{\mathrm{par}}_{\kappa,n,(ac_{\widehat{Y}})^{1/\kappa}\frac{\alpha}{\kappa}L  z^{1-\kappa/\alpha}} dz
	\\&
	\sim
	\frac{u^p}{n(ac_{\widehat{Y}})^{-1/\kappa}}
	\Psi(u)
	\\&\quad
	\times
	\int\limits_{\frac{1}{M'}}^{M'} 
	\exp\left(-d\mathbb{I}(1+(p-2/\kappa)\hat{\gamma}=0) z^{\hat{\gamma}}
	-b\mathbb{I}(2+(p-2/\kappa)\hat{\beta}=0) z^{\hat{\beta}}\right)\mathcal{H}^{\mathrm{par}}_{\kappa,n,(ac_{\widehat{Y}})^{1/\kappa}\frac{\alpha}{\kappa}L  z^{1-\kappa/\alpha}} dz.
	\end{align*}
	In the last asymptotic equivalence we use the fact that $p \leq 2/\kappa-\max(2/\hat{\beta},1/\hat{\gamma})$.
	
	\underline{\emph{Passing to a limit in $n$ in $\Sigma_{u,n,M'}$; the proof of \eqref{step1}}} We have further 
	\begin{align}
	\label{sumwithm}
	&
	\lim_{n \to \infty} \limsup_{u \to \infty} \Bigg|
	\frac{\Sigma_{u,n}}{u^p\Psi(u)} 
	\notag
	\\&
	-
	(ac_{\widehat{Y}})^{1/\kappa}		
	\int\limits_{\frac{1}{M'}}^{M'} 
	\exp\left(-d\mathbb{I}(1+(p-2/\kappa)\hat{\gamma}=0) z^{\hat{\gamma}}
	-b\mathbb{I}(2+(p-2/\kappa)\hat{\beta}=0) z^{\hat{\beta}}\right)\mathcal{H}^{\mathrm{par}}_{\kappa,(ac_{\widehat{Y}})^{1/\kappa}\frac{\alpha}{\kappa}L  z^{1-\kappa/\alpha}} dz
	\Bigg|=0.
	\end{align}
	It is indeed possible to pass to the limit:
	firstly, the existence, positivity and finiteness of $\mathcal{H}^{\mathrm{par}}_{\kappa,L}$ was proved in \citet*{dkebicki2015parisianJAP}. In particular, $$\sup_{n > 1} \frac{\mathcal{H}^{\mathrm{par}}_{\kappa,n,0}}{n}
	<\infty.$$ Hence, we can swap the integral and the limit in $n$, since $$\sup_{n > 1, L \geq 0} \frac{\mathcal{H}^{\mathrm{par}}_{\kappa,n,L}}{n}\leq 
	\sup_{n > 1} \frac{\mathcal{H}^{\mathrm{par}}_{\kappa,n,0}}{n}
	<\infty.$$ Since as mentioned before for all $L \geq 0$ $\mathcal{H}^{\mathrm{par}}_L>0$, we have 
	\begin{align}
	\label{posconst}
	\int\limits_{\frac{1}{M'}}^{M'} 
	\exp\left(-d\mathbb{I}(1+(p-2/\kappa)\hat{\gamma}=0) z^{\hat{\gamma}}
	-b\mathbb{I}(2+(p-2/\kappa)\hat{\beta}=0) z^{\hat{\beta}}\right)\mathcal{H}^{\mathrm{par}}_{\kappa,(ac_{\widehat{Y}})^{1/\kappa}\frac{\alpha}{\kappa}L  z^{1-\kappa/\alpha}} dz>0.
	\end{align}
	
	Remark that by the Bonferroni inequality
	\begin{align*}
	\pi_{u,M'}
	&=&
	\pk*{\sup_{t \in E_u}
		\inf_{s \in [0,1]}	
		\left(
		\frac{
			\widehat{X}(t+\widehat{L}_{u,t}s)
		}{
			1+\frac{d(t+\widehat{L}_{u,t}s)^{\hat{\gamma}}}{u}
		}
		\right)
		>u
	}
	\\
	&\leq&
	\pk*{\sup_{t \in E(u,n)}
		\inf_{s \in [0,1]}	
		\left(
		\frac{
			\widehat{X}(t+\widehat{L}_{u,t}s)
		}{
			1+\frac{d(t+\widehat{L}_{u,t}s)^{\hat{\gamma}}}{u}
		}
		\right)
		>u
	}+\Delta_{u,n,M'}
	\\
	&\leq&
	\Sigma_{u,n,M'}+\Sigma\Sigma_{u,n,M'}+\Delta_{u,n,M'}.
	\end{align*}
	On the other hand, again by the Bonferroni inequality
	\begin{align*}
	\pi_{u,M'}
	&\geq&
	\pk*{\sup_{t \in E(u,n)}
		\inf_{s \in [0,1]}	
		\left(
		\frac{
			\widehat{X}(t+\widehat{L}_{u,t}s)
		}{
			1+\frac{d(t+\widehat{L}_{u,t}s)^{\hat{\gamma}}}{u}
		}
		\right)
		>u
	}
	\\
	&\geq&
	\Sigma_{u,n,M'}-\Sigma\Sigma_{u,n,M'}.
	\end{align*}
	
	Therefore, \eqref{condA1}, \eqref{dsnegl} and \eqref{sumwithm} give 
	
	\begin{align}
		\label{pum}
		\lefteqn{\pi_{u,M'}
		}\notag\\
		&\sim 
		u^p\Psi(u)
		(ac_{\widehat{Y}})^{1/\kappa}	\notag
		\\
		&\quad\times	
		\int\limits_{\frac{1}{M'}}^{M'} 
		\exp\left(-d\mathbb{I}(1+(p-2/\kappa)\hat{\gamma}=0) z^{\hat{\gamma}}
		-b\mathbb{I}(2+(p-2/\kappa)\hat{\beta}=0) z^{\hat{\beta}}\right)\mathcal{H}^{\mathrm{par}}_{\kappa,(ac_{\widehat{Y}})^{1/\kappa}\frac{\alpha}{\kappa}L  z^{1-\kappa/\alpha}} dz
	\end{align}
	for all $L \geq 0$.	
	
	\underline{\emph{The upper bound on $\pi_u$}}
	We have further 
	\begin{align*}
	\pi_u
	&\leq 
	\pk*{\sup_{t \in [0,\frac{1}{M'}u^{-2/\kappa+p}]}
		\inf_{s \in [0,1]}	
		\left(
		\widehat{X}(t+\widehat{L}_{u,t}s)
		-d(t+\widehat{L}_{u,t}s)^{\hat{\gamma}}
		\right)
		>u
	}
	\\&
	+\pi_{u,M'}
	\\&+
	\pk*{\sup_{t \in [M' u^{-2/\kappa+p},\delta]}
		\left(
		\inf_{s \in [0,1]}	
		\widehat{X}(t+\widehat{L}_{u,t}s)
		-d(t+\widehat{L}_{u,t}s)^{\hat{\gamma}}
		\right)
		>u
	}.
	\end{align*}
	Since for now we consider only the cases when $p>0$, by \eqref{largecut}, \Cref{smallcut} and \eqref{pum}
	for all $M'>0$ we obtain
	\begin{align*}
	\pi_u
	&\leq 
	(1+c(M')+o(1))
	u^p \Psi(u)(ac_{\widehat{Y}})^{1/\kappa}\\ 
	& \cEE{\times}\int\limits_{\frac{1}{M'}}^{M'} 
	\exp\left(-d\mathbb{I}(1+(p-2/\kappa)\hat{\gamma}=0) z^{\hat{\gamma}}
	-b\mathbb{I}(2+(p-2/\kappa)\hat{\beta}=0) z^{\hat{\beta}}\right)\mathcal{H}^{\mathrm{par}}_{\kappa,(ac_{\widehat{Y}})^{1/\kappa}\frac{\alpha}{\kappa}L  z^{1-\kappa/\alpha}} dz,
	\end{align*}
	where $c(M') \to 0$ as $M' \to \infty$. In particular we obtain
	\begin{align}
	\label{constbdd}
	\pi_u
	=O(u^p\Psi(u))
	\end{align}
	as $u \to \infty$. Therefore letting $M' \to \infty$, by the monotone convergence theorem we obtain
	\begin{align*}
	\lefteqn{
		\pk*{\sup_{t \in [0,\delta]}
			\inf_{s \in [0,1]}					\left(
			\widehat{X}(t+\widehat{L}_{u,t}s)
			-d(t+\widehat{L}_{u,t}s)^{\hat{\gamma}}
			\right)
			>u
		}
	}
	\\& \leq 
	(1+o(1))
	u^p\Psi(u)
	(ac_{\widehat{Y}})^{1/\kappa}	\\&
	\cEE{\times}
	\int\limits_{0}^{\infty} 
	\exp\left(-d\mathbb{I}(1+(p-2/\kappa)\hat{\gamma}=0) z^{\hat{\gamma}}
	-b\mathbb{I}(2+(p-2/\kappa)\hat{\beta}=0) z^{\hat{\beta}}\right)\mathcal{H}^{\mathrm{par}}_{\kappa,(ac_{\widehat{Y}})^{1/\kappa}\frac{\alpha}{\kappa}L  z^{1-\kappa/\alpha}} dz.
	\end{align*}
	\underline{\emph{The lower bound on $\pi_u$, proof of \eqref{step2},
			finiteness and positivity of the Parisian constant}}
	
	On the other hand
	for all $M'$
	\begin{align*}
	\pi_u
	&\geq 
	\pi_{u,M'}
	\\ &
	\geq  
	(1+o(1))
	u^p\Psi(u)
	(ac_{\widehat{Y}})^{1/\kappa}\\
	& \cEE{\times }		
	\int\limits_{0}^{\infty} 
	\exp\left(-d\mathbb{I}(1+(p-2/\kappa)\hat{\gamma}=0) z^{\hat{\gamma}}
	-b\mathbb{I}(2+(p-2/\kappa)\hat{\beta}=0) z^{\hat{\beta}}\right)\mathcal{H}^{\mathrm{par}}_{\kappa,(ac_{\widehat{Y}})^{1/\kappa}\frac{\alpha}{\kappa}L  z^{1-\kappa/\alpha}} dz.
	\end{align*}
	Consequently, if $p>0$, then by \eqref{borell}
	\begin{align}
	\label{finppos}
	\lefteqn{\pk*{\underset{t \in [0,T]}{\sup}\; \underset{s \in [0,L_u]}{\inf}\left( X(t+s)-d(t+s)^{\gamma}
			\right) > u }}\notag\\
	&=\pi_u+o(\Psi(u))\notag
	\\&
	\sim
	u^p\Psi(u)
	(ac_{\widehat{Y}})^{1/\kappa}		
	\int\limits_{0}^{\infty} 
	\exp\left(-d\mathbb{I}(1+(p-2/\kappa)\hat{\gamma}=0) z^{\hat{\gamma}}
	-b\mathbb{I}(2+(p-2/\kappa)\hat{\beta}=0) z^{\hat{\beta}}\right)\notag\\ &\cEE{\times}\mathcal{H}^{\mathrm{par}}_{\kappa,(ac_{\widehat{Y}})^{1/\kappa}\frac{\alpha}{\kappa}L  z^{1-\kappa/\alpha}} dz.
	\end{align}
	Remark that \eqref{constbdd} and \eqref{finppos} imply that 
	\begin{align*}
	\int\limits_{0}^{\infty} 
	\exp\left(-d\mathbb{I}(1+(p-2/\kappa)\hat{\gamma}=0) z^{\hat{\gamma}}
	-b\mathbb{I}(2+(p-2/\kappa)\hat{\beta}=0) z^{\hat{\beta}}\right)
	\mathcal{H}^{\mathrm{par}}_{\kappa,(ac_{\widehat{Y}})^{1/\kappa}\frac{\alpha}{\kappa}L  z^{1-\kappa/\alpha}} dz < \infty.
	\end{align*}
	Consequently, since we already know that the above integral is positive, and since in cases (i), (ii) (which correspond to $p>0$) of \Cref{mainth} one can check by a change of variables that
	\begin{align*}
	c=(ac_{\widehat{Y}})^{1/\kappa}\int\limits_{0}^{\infty} 
	\exp\left(-d\mathbb{I}(1+(p-2/\kappa)\hat{\gamma}=0) z^{\hat{\gamma}}
	-b\mathbb{I}(2+(p-2/\kappa)\hat{\beta}=0) z^{\hat{\beta}}\right)
	\mathcal{H}^{\mathrm{par}}_{\kappa,(ac_{\widehat{Y}})^{1/\kappa}\frac{\alpha}{\kappa}L  z^{1-\kappa/\alpha}} dz
	\end{align*}
	the proof for $p>0$ is finished.
	
	\textbf{\underline{Case $p=0$.}}
	In the cases when $p=0$, we will again proceed in several steps. As before, fix $M'>0$. 
	Let 
	$q \coloneq \min\left(-2/\kappa,-2/\hat{\beta},-1/\hat{\gamma}\right)$ and remark that in the setting of \Cref{mainth} for the case $p=0$ we have 
	\[
	L_u=Lu^{q(\kappa/\alpha)}.
	\] 
	
	Denote
	\[
	\pi'_u
	:=\pk*{\sup_{t \in [0,\delta]}
		\inf_{s \in [0,1]}	
		\widehat{X}(t+\widehat{L}_{u,t}s)
		-d(t+\widehat{L}_{u,t}s)^{\hat{\gamma}}
		>u
	}
	\]
	
	Observe that
	\begin{align}
	\label{fracrep}
	\lefteqn{\pk*{\sup_{t \in [0,M'u^q]}
			\inf_{s \in [0,1]}	
			\widehat{X}(t+\widehat{L}_{u,t}s)
			-d(t+\widehat{L}_{u,t}s)^{\hat{\gamma}}
			>u
	}}\notag
	\\
	&=
	\pk*{\sup_{\tau \in [0,M']}
		\inf_{s \in [0,1]}	
		\frac{
			\widehat{X}(u^q \tau+\widehat{L}_{u,u^q \tau}s)
		}{1+u^{-1}
			d(u^q \tau+\widehat{L}_{u,u^q \tau}s)^{\hat{\gamma}}
		}
		>u
	}=:\pi'_{u,M'}.
	\end{align}
	Our plan for the next steps will consist in identifying the asymtptotics of $\pi'_{u,M'}$, passing to the limit $M' \to \infty$ and checking
	\begin{align}
	\label{step2'}
	\lim_{M' \to \infty} \limsup_{u \to \infty}\frac{\left|\pi'_u- \pi'_{u,M'} \right|}{\Psi(u)} =0.
	\end{align}
	\underline{\emph{Asymptotics of $\pi'_{u,M}$}} We can apply  \Cref{the-weak-conv-cor} to the Gaussian process 
	\[
	Z_u(\tau,s)=\frac{
		\widehat{X}(u^q \tau+\widehat{L}_{u,u^q \tau}s)
	}{1+u^{-1}
		d(u^q \tau+\widehat{L}_{u,u^q \tau}s)^{\hat{\gamma}}
	}.
	\]
	
	Condition \Cref{C0} will hold obviously.
	Remark that 
	\begin{align}
	\label{Lht2}
	\widehat{L}_{u,u^q\tau}=((u^q\tau)^{\kappa/\alpha}+L_u)^{\alpha/\kappa}-u^q \tau=((u^q\tau)^{\kappa/\alpha}+L u^{q(\kappa/\alpha)})^{\alpha/\kappa}-u^q \tau=u^q((\tau^{\kappa/\alpha}+L)^{\alpha/\kappa}-\tau).
	\end{align}
	
	Let us check condition \Cref{C1}. 
	\begin{align*}
	\lefteqn{
		u^2\left(1-\mathrm{Var}\left(
		Z_u(\tau,s) 
		\right)\right)}
	\\
	&
	=
	u^2\left(1-\frac{\sigma_{\widehat{X}}(u^q \tau+\widehat{L}_{u,u^q \tau}s)}{1+u^{q\hat{\gamma}-1}
		d( \tau+u^{-q}\widehat{L}_{u,u^q \tau}s)^{\hat{\gamma}}}\right)
	\\
	&
	=
	o(1)+u^{1+q\hat{\gamma}}d(\tau+((\tau^{\kappa/\alpha}+L)^{\alpha/\kappa}-\tau)s)^{\hat{\gamma}}+u^{2+q\hat{\beta}}b(\tau+((\tau^{\kappa/\alpha}+L)^{\alpha/\kappa}-\tau)s)^{\hat{\beta}}
	\\ &\to
	\mathbb{I}\left(q=-1/\hat{\gamma}\right)d(\tau+((\tau^{\kappa/\alpha}+L)^{\alpha/\kappa}-\tau)s)^{\hat{\gamma}}
	+\mathbb{I}(q=-2/\hat{\beta})b(\tau+((\tau^{\kappa/\alpha}+L)^{\alpha/\kappa}-\tau)s)^{\hat{\beta}}
	\end{align*} 
	as $u \to \infty$, where for the last two asymptotics we used the fact that 
	\\$q \leq \min\left(-2/\hat{\beta},-1/\hat{\gamma}\right)$. 
	
	We show next \Cref{C2}. We have (recall the notation $Z(t)=\overline{X}(t^{\kappa/\alpha})$)
	\begin{align*}
	\lefteqn{u^2\left(\mathrm{Var}(
		\overline{Z}_u(\tau_1,s_1)-\overline{Z}_u(\tau_2,s_2))\right)}
	\\&=u^2 \mathrm{Var}(Z(u^q \tau_1+\widehat{L}_{u,u^q \tau_1}s_1)-Z(u^q \tau_2+\widehat{L}_{u,u^q \tau_2}s_2))
	\\&\sim
	2aV_{\widehat{Y}}( \tau_1+u^{-q}\widehat{L}_{u,u^q \tau_1}s_1, \tau_2+u^{-q}\widehat{L}_{u,u^q \tau_2}s_2)u^{2+q \kappa}
	\\&\to
	2aV_{\widehat{Y}}( \tau_1+((\tau_1^{\kappa/\alpha}+L)^{\alpha/\kappa}-\tau_1)s_1, \tau_2+((\tau_2^{\kappa/\alpha}+L)^{\alpha/\kappa}-\tau_2)s_2)\mathbb{I}(q=-2/\kappa)
	\end{align*} 
	as $u \to \infty$, where for the last
	asymptotics we used \eqref{Lht2} and the fact that \\$q \leq \min\left(-2/\hat{\beta},-1/\hat{\gamma}\right)$. Condition \Cref{C3}, as before, follows from (\ref{bddcorr}).
	
	Remark that
	\begin{align*}
	&\mathbb{I}(q=-1/\hat{\gamma})
	=\mathbb{I}(2 \gamma \leq \min(\alpha,\beta)),
	\\
	&	
	\mathbb{I}(q=-2/\hat{\beta})
	=\mathbb{I}(\beta \leq \min(\alpha,2 \gamma)),
	\\
	&
	\mathbb{I}(q=-2/\kappa)
	=\mathbb{I}(\alpha\leq \min(\beta,2 \gamma)).	
	\end{align*}
	
	Therefore, setting 
	\[
	f(\tau,s)=
	\tau+((\tau^{\kappa/\alpha}+L)^{\alpha/\kappa}-\tau)s
	\] 
	and
	\begin{align*}
	\hat{h}(\tau,s)&\coloneq&\mathbb{I}\left(2\gamma\leq \min\left(\alpha,\beta\right)\right)d  f(\tau,s)^{\hat{\gamma}}
	+\mathbb{I}(\beta\leq\min\left(\alpha,2\gamma\right))b  f(\tau,s)^{\hat{\beta}},
	\\
	\eta(\tau,s)&\coloneq&\widehat{Y}(a^{1/\kappa}f(\tau,s))\mathbb{I}(\alpha\leq\min\left(\beta,2\gamma\right))
	\end{align*} we obtain
	\begin{align}
	\label{zerasymp}
	\pi'_{u,M'}
	\sim\Psi(u)\mathcal{H}^{\hat{h}}_{\eta,M',1}
	\end{align}
	as $u \to \infty$.
	
	Remark that $f(\tau,s)=f(f(\tau,s),0)$, and therefore $\hat{h}(\tau,s)=\hat{h}(f(\tau,s),0)$, $\eta(\tau,s)=\eta(f(\tau,s),0)$.
	
	Therefore, if $\alpha \leq \min\left(\beta,2\gamma\right)$, then 
	\begin{align*}
	&\sup_{\tau\in [0,M']} \inf_{s \in [0,1]}
	\left( \sqrt{2}\eta(\tau,s)-\mathrm{Var}(\eta(\tau,s))-\hat{h}(\tau,s)
	\right)
	\\
	&=
	\sup_{\tau\in [0,M']} \inf_{s \in [0,1]}
	\left( \sqrt{2}\eta(f(\tau,s),0)-\mathrm{Var}(\eta(f(\tau,s),0))-\hat{h}(f(\tau,s),0)
	\right)
	\\
	&=
	\sup_{\tau\in [0,M']}
	\inf_{s \in [f(\tau,0),f(\tau,1)]}
	\left(
	\sqrt{2}\eta(s,0)-\mathrm{Var}(\eta(s,0))-\hat{h}(s,0)
	\right)
	\\
	&=
	\sup_{\tau\in [0,M']}
	\inf_{s \in [\tau,(\tau^{\kappa/\alpha}+L)^{\alpha/\kappa}]}
	\left(
	\sqrt{2}
	\widehat{Y}(a^{1/\kappa}s)-\mathrm{Var}(\widehat{Y}(a^{1/\kappa}s))-\hat{h}(s,0)
	\right)
	\\&=
	\sup_{\tau\in [0,M']}
	\inf_{s \in [a^{1/\kappa}\tau,a^{1/\kappa}(\tau^{\kappa/\alpha}+L)^{\alpha/\kappa}]}
	\left(
	\sqrt{2}\widehat{Y}(s)-\mathrm{Var}(\widehat{Y}(s))-\hat{h}(a^{-1/\kappa}s,0)
	\right)
	\\&=
	\sup_{\tau\in [0,M']}
	\inf_{s \in [a^{1/\alpha}\tau^{\kappa/\alpha},a^{1/\alpha}(\tau^{\kappa/\alpha}+L)]}
	\left(
	\sqrt{2}Y(s)-\mathrm{Var}(Y(s))-\hat{h}(a^{-1/\kappa}s^{\alpha/\kappa},0)
	\right)\\
	&=
	\sup_{t\in [0,a^{1/\alpha}M'^{\kappa/\alpha}]}
	\inf_{s \in [0,a^{1/\alpha}L]}
	\left(
	\sqrt{2}Y(t+s)-\mathrm{Var}(Y(t+s))-\hat{h}(a^{-1/\kappa}(t+s)^{\alpha/\kappa},0)
	\right).
	\end{align*}
	Therefore, we obtain 
	\begin{align}
	\label{pzerowithm}
	\pi'_{u,M'}\sim\Psi(u)
	\mathcal{H}^{\mathrm{par},h}_{Y\mathbb{I}(\alpha\leq\min\left(\beta,2\gamma\right)),a^{1/\alpha}M'^{\kappa/\alpha},a^{1/\alpha}L},
	\end{align}
	with $$h(t)=a^{-\beta/\alpha}bt^{\beta}\mathbb{I}(\beta\leq\min\left(\alpha,2\gamma\right))+a^{-\gamma/\alpha}dt^{\gamma}\mathbb{I}\left(2\gamma\leq \min\left(\alpha,\beta\right)\right)
	,$$
	since 
	\[
	\hat{h}(\tau,0)
	=\mathbb{I}\left(2\gamma\leq \min\left(\alpha,\beta\right)\right)d  \tau^{\hat{\gamma}}
	+\mathbb{I}(\beta\leq\min\left(\alpha,2\gamma\right))b  \tau^{\hat{\beta}},
	\]
	so
	\[
	\hat{h}(a^{-1/\kappa}(t+s)^{\alpha/\kappa},0)
	=h(t+s).
	\]
	
	\underline{\emph{The estimate of $\pi'_u$, proof of \eqref{step2'},  finiteness and positivity of the Parisian constant}}
	
	By \eqref{fracrep}  
	\[
	0 \leq \pi'_u-\pi'_{u,M}
	\leq
	\pk*{\sup_{t \in [M'u^q,\delta]}
		\inf_{s \in [0,1]}	
		\left(
		\widehat{X}(t+\widehat{L}_{u,t}s)
		-d(t+\widehat{L}_{u,t}s)^{\hat{\gamma}}
		\right)
		>u
	},
	\]
	so \eqref{step2'} follows from \eqref{largecut}.
	In particular, picking $M'$ large enough, as $u \to \infty$, from \eqref{pzerowithm} we obtain
	\begin{align}
	\label{constbdd'}
	\pi'_u
	=O(\Psi(u)).
	\end{align}
	Therefore, letting $M' \to \infty$ in \eqref{pzerowithm} we obtain
	\[
	\pi'_u 
	\sim 
	\Psi(u)
	\mathcal{H}^{\mathrm{par},h}_{Y\mathbb{I}(\alpha\leq\min\left(\beta,2\gamma\right)),\ksn{\infty},a^{1/\alpha}L}
	\]
	with as $u \to \infty$, and $\mathcal{H}^{\mathrm{par},h}_{Y\mathbb{I}(\alpha\leq\min\left(\beta,2\gamma\right)),\ksn{\infty},a^{1/\alpha}L}<\infty$.
	
	By \eqref{borell} this implies
	\[
	\pk*{\underset{t \in [0,T]}{\sup}\; \underset{s \in [0,L_u]}{\inf}\left( X(t+s)-d(t+s)^{\gamma}
		\right) > u
	}	
	\sim 
	\Psi(u)
	\mathcal{H}^{\mathrm{par},h}_{Y\mathbb{I}(\alpha\leq\min\left(\beta,2\gamma\right)),\ksn{\infty},a^{1/\alpha}L}		
	\]
	as desired.
	
	Since clearly $\mathcal{H}^{\mathrm{par},h}_{Y\mathbb{I}(\alpha\leq\min\left(\beta,2\gamma\right)),\ksn{\infty},a^{1/\alpha}L}>0$, the proof of the case $p=0$ of \Cref{mainth} is finished.		
\end{proof}

\section*{Acknowledgements}
Financial support by SNSF Grant 200021-196888 is kindly acknowledged.

\section*{Disclosure statement}
The author has no conflicts of interest to declare that are relevant to the content of this article.

\section*{Funding}
This work was supported by the SNSF under Grant 200021-196888.

\bibliographystyle{apacite}
\bibliography{EEEA}



\end{document}